\numberwithin{equation}{section}
\newtheorem{thm}{Theorem}[section]
\newtheorem{cor}[thm]{Corollary}
\newtheorem{lem}[thm]{Lemma}
\newtheorem{prop}[thm]{Proposition}
\newtheorem{defn}[thm]{Definition}
\theoremstyle{definition}
\newtheorem{rmk}[thm]{Remark}
\newtheorem{example}[thm]{Example}
\newcounter{alphabet}
\newcommand{\bysame}{\leavevmode\hbox to3em{\hrulefill}\,}
\begin{document}
\baselineskip=21pt
\markboth{} {}

\bibliographystyle{amsplain}
\title[Balian-Low type theorems on $L^2(\mathbb{C})$]
{Balian-Low type theorems on $L^2(\mathbb{C})$}

\author{Anirudha poria}
\author{Jitendriya Swain}
\address{Department of Mathematics,
Indian Institute of Technology Guwahati,
Guwahati 781039, \;\; India.} 
\email{anirudhamath@gmail.com, jitumath@iitg.ac.in}
\keywords{Balian-Low Theorem; time-frequency analysis; special Hermite operator, Weyl transform; Zak transform} \subjclass[2010]{Primary
 42C15; Secondary 47B38.}
\begin{abstract} In this paper we prove amalgam Balian-Low theorems and Balian-Low type theorems on $L^2(\mathbb{C})$ for the special Hermite operator using the Weyl transform.
\end{abstract}

\date{\today}
\maketitle
\def\BC{{\mathbb C}} \def\BQ{{\mathbb Q}}
\def\BR{{\mathbb R}} \def\BI{{\mathbb I}}
\def\BZ{{\mathbb Z}} \def\BD{{\mathbb D}}
\def\BP{{\mathbb P}} \def\BB{{\mathbb B}}
\def\BS{{\mathbb S}} \def\BH{{\mathbb H}}
\def\BE{{\mathbb E}}
\def\BN{{\mathbb N}}
\def\LP{{W(L^p(\BR^d, \BH), L^q_v)}}
\def\LPN{{W_{\BH}(L^p, L^q_v)}}
\def\LPQ{{W_{\BH}(L^{p'}, L^{q'}_{1/v})}}
\def\L1{{W_{\BH}(L^{\infty}, L^1_w)}}
\def\LB{{L^p(Q_{1/ \beta}, \BH)}}
\def\SP{S^{p,q}_{\tilde{v}}(\BH)}
\def\f{{\bf f}}
\def\h{{\bf h}}
\def\hp{{\bf h'}}
\def\m{{\bf m}}
\def\g{{\bf g}}
\def\ga{{\boldsymbol{\gamma}}}
\vspace{-.5cm}

\section{Introduction}
The Balian-Low theorem (BLT) is one of the fundamental and interesting result in time-frequency analysis. It says that a function $g\in L^2(\mathbb{R})$ generating Gabor Riesz basis cannot be well-localized in both time and frequency domains. Precisely if $g \in L^2(\BR)$ and if a Gabor system $\mathcal{G}(g,a,b):=\{e^{2 \pi i m bt} g(t -na)\}_{m,n \in \BZ}$ with $ab=1$, forms an orthonormal basis for $L^2(\BR)$, then \[ \left( \int_{-\infty}^\infty |t g(t)|^2 dt \right) \left( \int_{-\infty}^\infty |\gamma \hat{g}(\gamma)|^2 d\gamma \right)=+\infty, \] where $\hat{g}$ is the Fourier transform of $g$ formally defined by $\displaystyle\hat{g}(\gamma)=\int_{-\infty}^\infty g(t) e^{-2 \pi i \gamma t} dt$. This result was originally stated by Balian \cite{bal81} and independently by Low in \cite{low85}. The proofs given by Balian and Low each contained a technical gap, which was filled by Coifman et al. \cite{dau90} and extended the BLT to the case of Riesz bases. Battle \cite{bat88} provided an elegant and entirely new proof based on the operator theory associated with the classical uncertainty principle. For general Balian-Low type results, historical comments and variations of BLT we refer to \cite{ben95,dau93}. Further BLT is extended to symplectic lattices in higher dimensions \cite{fei97, gro02}, symplectic form on $\BR^{2d}$ \cite{ben03}, on locally compact abelian groups \cite{gro98}, multi-window Gabor systems by Zibulski and Zeevi \cite{zib97}  and for superframes by Balan \cite{bal98}.  We refer to \cite{asc14, ben06, gau08, hei06, nit13, tin12} for several versions of BLT under different settings.

In this paper we prove the amalgam BLTs and BLTs for the special Hermite operator using the Weyl transform on $L^2(\mathbb{C})$.
The  motivation to prove the BLT  on $L^2(\BC)$ arises from the classical Heisenberg's uncertainty principle on $L^2(\BR)$.
Let  $P$ and $M$ be the position and the momentum operators defined by $$Pf(t)=tf(t)\quad and\quad Mf(t)=\frac{d}{dt}f(t),$$ on a suitable domain.
\begin{thm}\label{th}(Classical Heisenberg's uncertainty principle on $L^2(\BR)$)
Let $f \in L^2(\BR)$. Then \begin{eqnarray}\label{hs}\Vert Pf \Vert_2 \; \Vert Mf \Vert_2\geq\frac{1}{4\pi}\|f\|_2^2.\end{eqnarray}
\end{thm}
The inequality (\ref{hs}) can also be written as \begin{eqnarray}\label{hs1}\|xf\|_2\|(-\Delta)^{\frac{1}{2}}f\|_2\geq \frac{1}{4\pi}\|f\|_2^2,\end{eqnarray} where $-\Delta$ is the Laplacian on $\mathbb{R}.$

We consider the special Hermite operator $L$ defined by \begin{eqnarray}\label{sub}L=-\frac{1}{2}(Z\bar{Z}+\bar{Z}Z)=-\Delta_z+4\pi^2|z|^2+4\pi i\left(x
\frac{\partial}{\partial y}-y\frac{\partial}{\partial x}\right),\end{eqnarray}
where \[Z=\frac{d}{dz}+ 2\pi \bar{z},\quad\quad
\bar{Z}=\frac{d}{d\bar{z}}- 2\pi z\] and $\Delta_z$  denotes the
Laplacian on $\mathbb{C}$. Analogous to the inequalities (\ref{hs}) and (\ref{hs1}), the following inequality is obtained for the special Hermite operator (see \cite{tha90}): For $f\in L^2(\mathbb{C})$
$$\Vert |z|f \Vert_2 \; \Vert L^{\frac{1}{2}}f \Vert_2\geq\frac{1}{4}\|f\|_2^2.$$
Further, the Laplacian on $\mathbb{R}$ can be written  as \begin{eqnarray}\label{lap}-\Delta=\frac{1}{4}(A^*-A)(A-A^*)=-\frac{1}{4}(A^*B+AB^*)\end{eqnarray} and satisfies the commutator relation $$[A,B]=[A,A^*]=-2I,$$where $B=A^*-A$ and $A, A^*$ denote the creation and annihilation operators on $L^2(\mathbb{R})$ respectively. For operators $X$ and $Y$, we have used the notation $[X,Y]=XY-YX$ to denote the commutator of $X$ and $Y$. The classical BLT can also be stated in terms of the operators $P$ and $M$. The expression for the special Hermite operator $L$ is identical to the Laplacian on $\mathbb{R}$ (see (\ref{sub}) and (\ref{lap})) with the relation $[Z,\bar{Z}]=-8\pi I$.

The above two facts motivate to prove the BLTs for the exact twisted Gabor frames (defined in Section \ref{3}) in terms of the operators $Z, \bar{Z}$ and $|z|, L^{\frac{1}{2}}$. We first state the Heisenberg type uncertainty inequality for $L^2(\mathbb{C})$.
\begin{thm}\label{HUP} Let $f\in L^2(\mathbb{C})$. Then $$\int_\mathbb{C}|Zf(z)|^2\,dz+\int_\mathbb{C}|\bar{Z}f(z)|^2\,dz\geq 8\pi\|f\|_2^2.$$
\end{thm}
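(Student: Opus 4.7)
My plan is to reduce the inequality to an eigenfunction expansion in the special Hermite basis $\{\phi_{m,n}\}_{m,n\geq 0}$, which diagonalizes everything in sight.

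First I would write any $f\in L^2(\mathbb{C})$ as $f=\sum_{m,n\geq 0}c_{m,n}\phi_{m,n}$, with $\|f\|_2^2=\sum_{m,n}|c_{m,n}|^2$, since $\{\phi_{m,n}\}$ is an orthonormal basis for $L^2(\mathbb{C})$. Using parts (1) and (4) of the preceding proposition, $Z\phi_{m,n}=i\sqrt{2n}\,\phi_{m,n-1}$ and $\bar{Z}\phi_{m,n}=i\sqrt{2n+2}\,\phi_{m,n+1}$, and these are orthonormal families (after accounting for the shift in the second index), so by Parseval
\[
\|Zf\|_2^2=\sum_{m,n}2n\,|c_{m,n}|^2,\qquad \|\bar{Z}f\|_2^2=\sum_{m,n}(2n+2)\,|c_{m,n}|^2.
\]
Adding and noting $4n+2\geq 2$ for every $n\geq 0$ yields the claim; when either sum diverges, the inequality is trivially $+\infty\geq 2\|f\|_2^2$.

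As an alternative (and more conceptual) route, I would compute on the dense domain of Schwartz functions using $Z^*=-\bar{Z}$:
\[
\|Zf\|_2^2+\|\bar{Z}f\|_2^2=\langle Z^*Zf,f\rangle+\langle \bar{Z}^*\bar{Z}f,f\rangle=-\langle(Z\bar{Z}+\bar{Z}Z)f,f\rangle=2\langle Lf,f\rangle,
\]
where the last equality uses the identity $L=-\tfrac{1}{2}(Z\bar{Z}+\bar{Z}Z)$. Since the eigenvalues of $L$ on $\phi_{m,n}$ are $2n+1\geq 1$, one immediately obtains $\langle Lf,f\rangle\geq \|f\|_2^2$ on any $f$ in the form domain, and hence the inequality. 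The passage from Schwartz functions (or from finite linear combinations of the $\phi_{m,n}$) to arbitrary $f\in L^2(\mathbb{C})$ is automatic: if the left-hand side is finite, $f$ belongs to the form domain of $L$, and otherwise the inequality is vacuous.

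The only real issue is the domain question—$Z$ and $\bar{Z}$ are unbounded and a priori $Zf$, $\bar{Z}f$ may not lie in $L^2(\mathbb{C})$—but the eigenfunction expansion approach handles this cleanly by interpreting $\|Zf\|_2^2$ and $\|\bar{Z}f\|_2^2$ as the values (possibly $+\infty$) of the series above. Thus I expect no serious obstacle, and I would present the first (direct basis expansion) argument for transparency.
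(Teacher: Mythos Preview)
Your primary argument is correct and is essentially identical to the paper's own proof: the paper expands $f$ in the special Hermite basis $\{\phi_{m,n}\}$, uses the same raising/lowering relations to obtain $\|Zf\|_2^2+\|\bar{Z}f\|_2^2=\sum_{m,n}(4n+2)|\langle f,\phi_{m,n}\rangle|^2\geq 2\|f\|_2^2$, and notes that equality holds precisely when $f=\sum_m c_m\phi_{m,0}$. Your alternative route via $\langle Lf,f\rangle$ is not in the paper but is just a repackaging of the same computation.
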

\begin{thm}(BLT)\label{th01}
Let $g \in L^2(\BC)$. If the twisted Gabor system $\mathcal{G}^t(g,1,1)=\{T_{(m,n)}^tg:m,n\in\mathbb{Z}\}$ forms an exact frame for $L^2(\BC)$, then $\Vert Zg \Vert_2 \; \Vert \bar{Z}g \Vert_2= + \infty.$
\end{thm}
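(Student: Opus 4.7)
The strategy is to emulate Battle's operator-theoretic proof of the classical Balian--Low theorem, with the pair $(Z,\bar Z)$ replacing the position/momentum pair $(P,M)$ and the identity $[Z,\bar Z]=-2I$ playing the role of $[P,M]=iI$. Argue by contradiction: assume both $\|Zg\|_2$ and $\|\bar Zg\|_2$ are finite. Since $\mathcal G^t(g,1,1)$ is an exact frame, it is a Riesz basis for $L^2(\BC)$, so it admits a unique biorthogonal dual window $\tilde g = S^{-1}g$ (where $S$ is the associated frame operator) satisfying $\langle T^t_{(m,n)}g,T^t_{(m',n')}\tilde g\rangle = \delta_{(m,n),(m',n')}$; in particular $\langle g,\tilde g\rangle=1$.

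The first technical step is to transfer the regularity to the dual, i.e.\ to show that $\|Z\tilde g\|_2,\|\bar Z\tilde g\|_2<\infty$. This rests on commutation identities of the form $[Z,T^t_{(m,n)}]=\alpha_{m,n}T^t_{(m,n)}$ and $[\bar Z,T^t_{(m,n)}]=\beta_{m,n}T^t_{(m,n)}$ with explicit scalars $\alpha_{m,n},\beta_{m,n}\in\BC$, coming from the Heisenberg representation underlying the twisted translations. These imply that $S$, and hence $S^{-1}$, boundedly preserve the Sobolev-type domain $\mathcal D:=\{f\in L^2(\BC):Zf,\bar Zf\in L^2\}$, so $\tilde g\in\mathcal D$.

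The core of the proof is a two-way evaluation of $\langle Zg,Z\tilde g\rangle-\langle\bar Zg,\bar Z\tilde g\rangle$. Using $Z^*=-\bar Z$ from the proposition above together with $[Z,\bar Z]=-2I$, the adjoint shifts yield
\[
\langle Zg,Z\tilde g\rangle-\langle\bar Zg,\bar Z\tilde g\rangle=\langle g,-\bar ZZ\tilde g\rangle-\langle g,-Z\bar Z\tilde g\rangle=\langle g,[Z,\bar Z]\tilde g\rangle=-2\langle g,\tilde g\rangle=-2.
\]
On the other hand, expanding each inner product in the biorthogonal dual basis,
\[
\langle Zg,Z\tilde g\rangle=\sum_{m,n}\langle Zg,T^t_{(m,n)}\tilde g\rangle\,\langle T^t_{(m,n)}g,Z\tilde g\rangle,
\]
and similarly for $\langle\bar Zg,\bar Z\tilde g\rangle$, one pushes $\bar Z$ (resp.\ $Z$) inside $T^t_{(m,n)}$ via the commutators above. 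The $\alpha_{m,n},\beta_{m,n}$-corrections are annihilated off the diagonal by biorthogonality, and a change of summation index $(m,n)\mapsto -(m,n)$, combined with the fact that $(T^t_{(m,n)})^*=T^t_{-(m,n)}$ up to a unit-modulus phase, identifies the two remaining series. This forces $\langle Zg,Z\tilde g\rangle=\langle\bar Zg,\bar Z\tilde g\rangle$, hence $0=-2$, the desired contradiction.

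The principal obstacle will be the symmetric identification of the two biorthogonal series: it requires careful bookkeeping of the phases and the scalars $\alpha_{m,n},\beta_{m,n}$ arising from the commutators, and absolute convergence of the sums (which is guaranteed by $Zg,\bar Zg,Z\tilde g,\bar Z\tilde g\in L^2$). A secondary but nontrivial point is the regularity transfer $g\mapsto\tilde g$ in Step~2, best handled by a Neumann-series expansion of $S^{-1}$ combined with the explicit commutators $[Z,T^t_{(m,n)}]$ and $[\bar Z,T^t_{(m,n)}]$.
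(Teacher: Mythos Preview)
Your overall architecture---Battle's commutator argument combined with a regularity transfer to the dual window---is sound, and in fact coincides with the paper's \emph{second} route (Section~5: Lemma~\ref{lem4}, Theorem~\ref{th1}, Theorem~\ref{th2}). Note, however, that the paper's \emph{primary} proof of Theorem~\ref{th01} (Section~4) is entirely different: it is a direct twisted-Zak-transform argument in the style of Coifman--Semmes, estimating the oscillation of $Z^tg$ over small cubes via the Weyl transform (Lemma~\ref{ineq}, Corollary~\ref{cor}) and then running a winding-number contradiction.

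Two specific issues with your execution. First, the commutators $[Z,T^t_{(m,n)}]$ and $[\bar Z,T^t_{(m,n)}]$ are not merely scalar multiples of $T^t_{(m,n)}$: they vanish outright (Lemma~\ref{lem4}, proved via $W(T^t_{(m,n)}f)=\pi(m,n)W(f)$ and $W(Zf)=iW(f)A$). This simplifies your Step~3; there are no $\alpha_{m,n},\beta_{m,n}$ corrections to annihilate. Second, and more seriously, your Step~2 is the real crux and your proposed Neumann-series mechanism is doubtful. Even with $[Z,T^t_{(m,n)}]=0$, the frame operator $S$ does \emph{not} commute with $Z$, since $Z^*=-\bar Z$ rather than $-Z$; concretely $ZSf=\sum\langle f,g_{m,n}\rangle(Zg)_{m,n}$ whereas $SZf=-\sum\langle f,(\bar Zg)_{m,n}\rangle g_{m,n}$, and there is no a~priori reason $\{(Zg)_{m,n}\}$ is even a Bessel sequence. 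The paper sidesteps this entirely via the twisted Zak transform: from the exact-frame bounds $A^{1/2}\le|Z^tg|\le B^{1/2}$ one gets $Z^t\tilde g=1/\overline{Z^tg}$ (Proposition~\ref{pro1}), and then an explicit computation yields $\overline{Z^t(Z\tilde g)}=-Z^t(\bar Zg)/(Z^tg)^2$, which immediately gives $\bar Zg\in L^2\Leftrightarrow Z\tilde g\in L^2$ (Theorem~\ref{th2}). You should replace your Neumann-series sketch with this Zak-transform identity.
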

\begin{thm}(BLT)\label{th001}
Let $g \in L^2(\BC)$. If the twisted Gabor system $\mathcal{G}^t(g,1,1)$ forms an exact frame for $L^2(\BC)$, then $\Vert |z|g \Vert_2 \; \Vert L^{\frac{1}{2}} g\Vert_2= + \infty.$
\end{thm}
Further, we obtain the following version of the BLT for radial functions on $\mathbb{C}.$
\begin{cor}\label{C3}(BLT for Laguerre expansions) If $g$ is a radial function on $\mathbb{C}$ satisfying the assumptions of Theorem \ref{th01}, then $$\displaystyle\sum_{N=0}^\infty (2N+1)|g^\sharp(N)|^2=+\infty,$$ where $\displaystyle g^\sharp(N)=\frac{1}{2}\int_0^\infty g(\sqrt{s})L_N\left(\frac{s}{2}\right)\exp\left(-\frac{s}{4}\right)\,ds$ and $L_N(s)$ denotes the usual Laguerre polynomial of type 0.
\end{cor}
We also obtain the following amalgam BLTs for exact twisted Gabor frames on $L^2(\BC)$.
\begin{thm}\label{ABLT}(Amalgam BLT)
Let $g\in L^2(\mathbb{C}).$ If the twisted Gabor system $\mathcal{G}^t(g, 1,1)$ is an exact frame for $L^2(\BC)$, then $$g\not \in W(C_0,\ell^1)\quad \mbox{and}\quad W(g)\not\in \mathcal{W},$$ where $\mathcal{W}=\{T\in \mathcal{B}_2: h(z)={tr}(\pi(z)^*T) ~\mbox{and}~ h\in W(C_0,\ell^1)\}$ and $\mathcal{B}_2$, $W(g)$, $tr(\pi(z)^*T)$ denote the space of all Hilbert-Schmidt operators on $L^2(\mathbb{R})$, Weyl transform of $g$ and trace of the operator $\pi(z)^*T$ respectively.
\end{thm}As a corollary we obtain the following.
\begin{cor}\label{C1}Under the assumptions of Theorem \ref{ABLT} the following statements hold.
\begin{enumerate}
\item[(i)] The functions $L^{\frac{1}{2}}g$ and $ W(g)H^{\frac{1}{2}}$ cannot be in $W(C_0,\ell^1)$ and $\mathcal{W}$ respectively, where $H$ denotes the Hermite operator on $L^2(\mathbb{R})$.
\item[(ii)] The functions $\bar{Z}ZL^{-\frac{1}{2}}g$ and $ Z\bar{Z}L^{-\frac{1}{2}}g$ cannot be in $W(C_0,\ell^1)$.

\end{enumerate}
\end{cor}

We obtain the following version of the amalgam BLT in terms of the operators $\partial_z, \partial_{\bar{z}}, Z, \bar{Z}$ and $L^{\frac{1}{2}}.$
\begin{thm}\label{T1}Let $g$ be a Schwartz class function on $\mathbb{C}$. If any one of the following functions \begin{eqnarray}\label{1}Z g,~\partial_zg,~\partial_{\bar{z}}\bar{g},~\bar{Z}\bar{g},~L^\frac{1}{2}g\end{eqnarray} is in $W(C_0,\ell^2)$, then $\mathcal{G}^t(g,1,1)$ cannot be a twisted Gabor frame for $L^2(\mathbb{C})$.
\end{thm}

Further, the difference between the BLTs and the amalgam BLT are illustrated by Examples \ref{e1} and \ref{e2}.

The paper is organized as follows. In Section \ref{2}, we provide the necessary background for proving BLT
and discuss some basic properties of frames. In Section \ref{3}, we define the twisted Gabor frame, twisted
Zak transform and discuss the properties of twisted Gabor frames in terms of the twisted Zak transform.
Using the properties of the twisted Zak transform we prove several versions of the amalgam BLT. In Section \ref{4}, we prove the BLT for exact twisted Gabor frames on $L^2(\BC)$ using the operators
$Z,\bar{Z}$ and $|z|, L^{\frac{1}{2}}$.
In Section \ref{5}, we make use of the uncertainty principle approach to give a second prove of Theorem \ref{th01}.
We illustrate by examples to emphasize that the BLT and the amalgam BLT are two distinct results.
 Finally, we discuss several consequences of the BLT in terms of the operators $Z, \bar{Z}$ and $|z|, L$ in Remark \ref{rmkl} and prove Corollary \ref{C3}.
\section{Notations and Background}\label{2}
The Weyl transform and the twisted convolution are closely related to the Fourier transform on the Heisenberg group.
Therefore we review the representation theory on the Heisenberg group to see various objects of interest arising from it.

One of the simple and natural examples of  non-abelian, non-compact groups is the
famous Heisenberg group $\mathbf{H}$, which plays an important role in several
branches of mathematics. The Heisenberg group $\mathbf{H}$ is a unimodular nilpotent Lie group whose
underlying manifold is $\BC \times \BR $ and the group operation
is defined by $$ (z,t) \cdot (w,s) = (z+w, t+s+\frac{1}{2}
Im(z\bar{w})).$$ The Haar measure on $\mathbf{H}$ is given by $dz dt$.

By the Stone-von Neumann theorem, the only infinite dimensional unitary
irreducible representations (up to unitary equivalence) are given
by $\pi_\lambda,~\lambda \in \BR\setminus\{0\}$, where $\pi_\lambda$ is defined
by $$ \pi_\lambda(z,t) \varphi(\xi) = e^{ i\lambda t} e^{ i\lambda( x
\xi + \frac{1}{2} x  y )} \varphi(\xi + y),$$ where $ z = x + i y$
and $ \varphi \in L^2(\BR) $.

The group Fourier transform of $f \in L^1(\mathbf{H})$ is defined as $$
\hat{f}(\lambda) = \int_{\mathbf{H}} f(z,t) \pi_\lambda(z,t) dz dt, \qquad
\lambda \in \BR\setminus\{0\}.$$ Note that for each $\lambda\in \mathbb{R}\setminus\{0\}$, $\hat{f}(\lambda)$ is a bounded linear operator on $L^2(\mathbb{R}).$ Under the operation ``group convolution"  $L^1(\mathbf{H})$ turns out to be a non-commutative Banach algebra.

Let $$f^{\lambda}(z)=\int_{\mathbb{R}}e^{i\lambda t}f(z,t)dt$$
denote the inverse Fourier transform of $f$ in the $t-$variable. Therefore $\hat{f}(\lambda) = \displaystyle\int_{\mathbb{C}} f^{\lambda}(z) \pi_\lambda(z,0) dz .$
Thus we are led to consider operators of the form \begin{equation}\label{weyl} W_\lambda(g)=\int_{\mathbb{C}} g(z) \pi_\lambda(z) dz,\end{equation} where $\pi_\lambda(z,0)=\pi_\lambda(z)$. For $\lambda=4\pi$ we call (\ref{weyl}) as the Weyl transform of $g$. Thus for $g\in L^1(\BC)$ and writing $\pi(z)$ in place of $\pi_1(z)$ we have \begin{equation}\label{weyl1}W(g)\varphi(\xi)=\int_{\mathbb{C}} g(z) \pi(z)\varphi(\xi) dz, \quad\quad \varphi\in L^2(\BR).\end{equation}
For $f,g\in L^1(\mathbb{C})$, the twisted convolution of $f,g$ is defined by $$f\times g(z)=\int_{\mathbb{C}} f(z-w)g(w)e^{2\pi i Im(z\cdot\bar{w})}dw.$$
Under twisted convolution $L^1(\mathbb{C})$ is a non-commutative Banach algebra. The Weyl transform of $f$ can be explicitly written as \[W(f)\varphi(\xi)=\int_{\BC} f(z) e^{4 \pi i (x\xi+\frac{1}{2} xy)} \varphi(\xi+y) dz, \;\; \varphi \in L^2(\BR), \;\; z=x+iy,\]
which maps $L^1(\BC)$ into the space of bounded operators on $L^2(\BR)$, denoted by $\mathcal{B}$. If $f \in L^2(\BC)$, then $W(f) \in \mathcal{B}_2$, the space of all Hilbert-Schmidt operators on $L^2(\BR)$ and satisfies the Plancherel formula \[\Vert W(f)\Vert_{\mathcal{B}_2}=\frac{1}{2}\Vert f \Vert_{L^2(\BC)}.\]
 For Schwartz class functions on $\mathbb{C}$, the inversion formula for the Weyl transform is given by $$f(z)=tr(\pi(z)^*W(f)),$$ where $\pi(z)^*$ is the adjoint of $\pi(z)$ and $tr$ is the usual trace on $\mathcal{B}$.
For a detailed study on Weyl transform we refer to the text of Thangavelu \cite{tha93, tha97}.

 Let $H_k$ denote the Hermite polynomial on $\mathbb{R}$, defined by $$
H_k(x)=(-1)^k \frac{d^k}{dx^k}(e^{-x^2})e^{x^2},~k=0,1,2,\cdots,$$ and
 $h_k$ denote the normalized Hermite functions on $\mathbb{R}$
defined by
$$h_k(x)=(2^k\sqrt{\pi}k!)^{-\frac{1}{2}}H_k(x)e^{-\frac{1}{2}x^2},~~k=0,1,2,\cdots.$$
Let $A=-\frac{d}{dx}+4\pi x$ and $~A^*=\frac{d}{dx}+4\pi x$ denote the creation and
annihilation operators in quantum mechanics respectively. The (scaled) Hermite operator
$H$ is defined as
 \begin{eqnarray}\label{her}H=\frac{1}{2}(AA^*+A^*A)=-\frac{d^2}{dx^2}+16\pi ^2x^2.\end{eqnarray}
Define $\tilde{h}_k(x)=(4\pi)^\frac{1}{4}h_k(\sqrt{4\pi}x).$ The functions $\{\tilde{h}_k\}$ are the eigenfunctions of the operator $H$
with eigenvalues $4\pi(2k+1), k=0,1,2\cdots.$ Using the Hermite functions, the special Hermite functions on
$\mathbb{C}$ are defined as
\begin{equation}\label{sh}\phi_{m,n}(z)=
\sqrt{2}\int_\mathbb{R}e^{4\pi i(x\xi+\frac{1}{2}xy)}\tilde{h}_m(\xi+y)\tilde{h}_n(\xi)\,d\xi,\end{equation}
where $z=x+iy \in \mathbb{C}$ and $m,n=0,1,2\cdots$. The functions $\{\phi_{m,n}:
m,n=0,1,2\cdots\}$ form an orthonormal basis for
$L^2(\mathbb{C})$. The special Hermite functions are the eigenfunctions of the special Hermite operator $L$ (or the twisted Laplacian) with eigenvalues $4\pi(2n+1), n=0,1,2\cdots$,  defined in (\ref{sub}).
 It is easy to see that the following relations hold (see \cite{tha93, tha97}).
\begin{prop}\label{ii}
\begin{enumerate}
\item[(i)] $Z(\phi_{m,n})=i2\sqrt{\pi}\sqrt{2n}~\phi_{m,n-1}$ and $\bar{Z}(\phi_{m,n})=i2\sqrt{\pi}\sqrt{2n+2}~\phi_{m,n+1}.$
\item[(ii)] $W(Zf)=iW(f)A \quad \mbox{and}\quad W(\bar{Z}f)=iW(f)A^*$ for every Schwartz class function $f$. (This expression is similar to the relation $(\frac{d}{dx}f\hat{)~}(\gamma)=2\pi i \gamma\hat{f}(\gamma)$.)
\item[(iii)] The adjoint $Z^*$ of $Z$ is $- \bar{Z}$.
\end{enumerate}
  \end{prop}
We refer to \cite{GL, KR} for a detailed study of the vector fields $Z$ and $\bar{Z}.$

\subsection{Frame and Riesz basis}
\begin{defn}
A sequence $\{f_k : k \in \BZ\}$ is a frame for a separable Hilbert space $\mathbb{H}$ if there exist constants $A, B>0$ such that  $A \Vert f \Vert^2 \leq \sum\limits_k |\langle f,f_k \rangle|^2 \leq B \Vert f \Vert^2,$ for all $f \in \BH$.
\end{defn}
  A frame $\{f_k\}$ is exact if it ceases to be a frame when any single element $f_n$ is deleted, that is, $\{f_k\}_{k \neq n}$ is not a frame for any $n$. 
A sequence $\{f_k : k \in \BZ\}$ is called a Riesz basis for a Hilbert space $\mathbb{H}$ if there exists a continuous, invertible, linear mapping $T$ on $\mathbb{H}$ such that $\{Tf_k\}$ forms an orthonormal basis for  $\mathbb{H}$. The concept of a Riesz basis and an exact frame for a frame sequence on a separable Hilbert space are the same.
\subsection{Gabor frames and density}
For $a,b>0$, $g \in L^2(\BR^d)$ and $n,k \in \BZ^d$ define $M_{b n} g(x):= e^{2 \pi i  b n\cdot x } g(x)$ and $T_{a k}g(x):=g(x-a k)$. The collection of functions $\mathcal{G}(g, a,b)=\{ M_{b n}T_{a k}g : \; k,n \in \BZ^d\}$ in $L^2(\BR^d)$, is called a \textit{Gabor frame} or a \textit{Weyl-Heisenberg frame} if there exist constants $A, B >0$ such that
$A \Vert f \Vert^2_2 \leq \sum_{k,n \in \BZ^d} |\langle f, M_{b n}T_{a k}g \rangle |^2 \leq B \Vert f \Vert^2_2$, for all $f \in L^2(\BR^d)$.

Let $S_{\mathcal{G}}$ be the corresponding frame operator with respect to the Gabor frame $\mathcal{G}(g, a,b)$ given by 
$S_{\mathcal{G}}f:=\sum\limits_{k,n \in \BZ^d} \langle f, M_{b n}T_{a k}g \rangle M_{b n}T_{a k}g, \;\; f \in L^2(\BR^d).$
Then there exists a dual window (canonical dual window) $\tilde{g}=S_{\mathcal{G}}^{-1}(g) \in L^2(\BR^d)$ such that $\mathcal{G}(\tilde{g}, a,b)=\{M_{b n}T_{a k} \tilde{g}: k,n \in \BZ^d \}$ also constitutes a frame for $L^2(\BR^d)$, called the dual Gabor frame. Consequently every $f \in L^2(\BR^d)$ possess the expansion $f=\sum_{k,n \in \BZ^d}\langle f, M_{b n}T_{a k}g \rangle M_{b n}T_{a k}\tilde{g}  = \sum_{k,n \in \BZ^d}\langle f, M_{b n}T_{a k}\tilde{g} \rangle M_{b n}T_{a k}g$
 with unconditional convergence in $L^2(\BR^d)$.

One of the important and interesting concept in frame theory is to obtain the necessary condition on the lattice parameters $a$ and $b$,
so that the Gabor system $\mathcal{G}(g, a,b)$ constitutes a frame. The density theorem for Gabor systems provides necessary conditions for the Gabor system $\mathcal{G}(g, a,b)$
to be complete, a frame or a Riesz basis. The Gabor system $\mathcal{G}(g, a,b)$ is complete if $ab<1$, a Riesz basis if $ab=1$ and is incomplete if $ab>1.$
We refer to \cite{ben95, hei07, jan98,lan67, lan93, nit11, ram95} and the references therein for a detailed study on density related results
for both regular and irregular Gabor systems in one or higher dimensions.

\section{Twisted Zak transform and Amalgam BLT}\label{3}
From the prospective of harmonic analysis the time frequency shift of a function on a locally compact abelian group $G$ are the elements of $\hat{G},$ where $\hat{G}$ is the Pontryagin dual of $G.$ In particular, the Gabor system is defined by lattices in $G\times\hat{G}\times\hat{G}\times G$, where the time domain is $G\times\hat{G}$ and the frequency space is $\hat{G}\times G$. The twisted Gabor systems are Gabor systems for functions on $\mathbb{R}\times\hat{\mathbb{R}}$ with the twisted modulation i.e. the rotation of the standard modulation on $\mathbb{R}^2$ by an angle $\frac{\pi}{2}$.
\begin{defn}
Let $f \in L^2(\BC)$ and $\epsilon=(\epsilon_1, \epsilon_2)\in\mathbb{C}$. We define twisted translation of $f$, denoted by $T^t_\epsilon f$, as
\begin{equation}\label{eq01}
T^t_\epsilon f(z)=f(z-\epsilon)e^{2\pi i Im(z\bar{\epsilon})}=e^{2\pi i (y\epsilon_1-x\epsilon_2)} f(x-\epsilon_1, y-\epsilon_2), \;\;\; z=x+iy \in \BC,
\end{equation}where $\bar{\epsilon}$ denotes the complex conjugate of $\epsilon$ and $Im (z\bar{\epsilon})$ denotes the imaginary part of $z\bar{\epsilon}$.
\end{defn}
Further, for $f \in L^2(\BC)$ and $a,b>0$, let $\mathcal{G}^t(g, a,b)$ be the collection of functions $\{T^t_{(am,bn)}f:m,n\in\mathbb{Z}\}$, where
$T^t_{(am,bn)}f(z)=e^{2\pi i (amy-bnx)} f(x-am, y-bn), \;\;\; z=x+iy \in \BC.
$
\begin{rmk} For $a=b=1$ the operator $T^t_{(m,n)}$ has the following properties.
\begin{enumerate}
\item[(i)] The adjoint $(T^t_{(m,n)})^*$ of $T^t_{(m,n)}$ is $T^t_{(-m,-n)}$.
\item[(ii)] $T^t_{(m_1,n_1)}T^t_{(m_2,n_2)} =T^t_{(m_1+m_2,n_1+n_2)}$.
\item[(iii)] $T^t_{(m,n)}$ is a unitary operator on $L^2(\BC)$ for all $(m,n) \in \BZ^2$.
\end{enumerate}
\end{rmk}
\begin{defn}
For $a,b>0$ and $g \in L^2(\BC)$, the collection of functions $\mathcal{G}^t(g, a,b)=\{ T^t_{(am,bn)}g : \; m,n \in \BZ\}$ in $L^2(\BC)$, is called a \textit{twisted Gabor frame} or a \textit{twisted Weyl-Heisenberg frame} if there exist constants $A, B >0$ such that
\begin{equation}
A \Vert f \Vert^2_2 \leq \sum_{m,n \in \BZ} |\langle f, T^t_{(am,bn)}g \rangle |^2 \leq B \Vert f \Vert^2_2, \;\; \forall f \in L^2(\BC).
\end{equation}
\end{defn}
Then one can define the twisted Gabor tight frames, a Riesz basis and the frame operator on $L^2(\mathbb{C})$ analogously as defined for Hilbert space frames.
For $a,b>0$ and $g\in L^2(\mathbb{C})$, the system $\mathcal{G}^t(g, a,b)$  is complete in $L^2(\mathbb{C})$ if and only if the system
$\{\rho(p,q)g:(p,q)\in\Lambda\subset \mathbb{R}^4\}$ is complete in $L^2(\mathbb{R}^2)$, where $p=(am,bn), q=(bn,-am)$ and $\rho(p,q)g(x)=e^{2\pi iqx g(x-p)}$ (see \cite{ram95}).
If $ab> 1$ then the twisted Gabor system $\mathcal{G}^t(g, a,b)$ is incomplete in $L^2(\mathbb{C})$.
Therefore without loss of generality we consider the case for $a=b=1$ throughout the paper.
\subsection{Twisted Zak transform}
The Zak transform $\mathcal{Z}f$ on $L^2(\BR)$, is defined as a function of two variables by $\mathcal{Z}f(x,t)=\displaystyle\sum_{k\in\BZ}T_kf(x)\cdot M_k\textbf{1}(t), \; x,t \in \BR$,
where $\textbf{1}$ is the constant function $1$. We define the twisted Zak transform by replacing the twisted modulation instead of usual modulation, which will be an important tool
to prove our main results.
\begin{defn}
Let $f \in L^2(\BC)$. The twisted Zak transform $Z^t f$ of $f$ is the function on $\BC^2$ defined by \[ (Z^tf)(z,w)= \sum_{k \in \BZ^2} T_kf(z)\, e^{2 \pi i Im (w\bar{k})}, \;\;\; z,w \in \BC. \]
\end{defn}


Clearly  $Z^tf$ is well-defined for continuous functions with compact support and converges in $L^2-$sense for $f\in L^2(\BC)$. In fact $Z^t$ is a unitary map of $L^2(\BC)$ onto $L^2(Q \times Q)$, where $Q:=[0,1)\times [0,1).$ The idea of the proof is similar to the Zak transform on $L^2(\mathbb{R})$ as in \cite{chr03}. The unitary nature of the twisted Zak transform allows to transfer certain properties of frames for $L^2(\mathbb{C})$ into equivalent statements in terms of the twisted Zak transform on $L^2(Q \times Q)$. More precisely,  $\{f_k\}$ is  complete or a frame or an exact frame or an orthonormal basis for $L^2(\mathbb{C})$ if and only if the same is true for $\{Z^tf_k\}$ in $L^2(Q \times Q)$. As in the case of the Zak transform on $L^2(\mathbb{R})$, we obtain the following properties of the twisted Zak transform on $L^2(\BC)$.
\begin{lem}\label{lem1}
Let $f \in L^2(\BC)$. Let $z=x+iy$, $w=r+is$ and $Q:=[0,1)\times [0,1) $. Then the following holds:
\begin{enumerate}
\item[(i)]  $Z^tf(z+1,w)=e^{2 \pi i s} Z^tf(z,w)$, $Z^tf(z+i,w)=e^{-2 \pi i r} Z^tf(z,w)$ \\and $Z^tf(z,w+1)=Z^tf(z,w+i)=Z^tf(z,w).$
\item[(ii)]  $Z^t (T^t_{(m,n)}f)(z,w)=e^{2 \pi i (my-nx)} e^{2 \pi i (nr-ms)} Z^tf(z,w).$
\item[(iii)] $\mathcal{G}^t(f,1,1)$ is complete in $L^2(\BC)$ if and only if $Z^tf \neq 0$ a.e.
\item[(iv)] $\mathcal{G}^t(f,1,1)$ is minimal and complete in $L^2(\BC)$ if and only if $1/(Z^tf) \in L^2(Q \times Q).$
\item[(v)] $\mathcal{G}^t(f,1,1)$ is a frame for $L^2(\BC)$ with frame bounds $A,B$ if and only if $0 < A^{1/2} \leq |Z^tf| \leq B^{1/2} < \infty \;\;\; \mathrm{a.e.}$ In this case, $\mathcal{G}^t(f,1,1)$ is an exact frame for $L^2(\BC)$.
\item[(vi)]  $\mathcal{G}^t(f,1,1)$ is an orthonormal basis for $L^2(\BC)$ if and only if $|Z^tf|^2=1$ a.e.
\item[(vii)] $\mathcal{G}^t(f,1,1)$ is a Riesz basis for $L^2(\BC)$ with bounds $A,B$ if and only if $0 < A^{1/2} \leq |Z^tf| \leq B^{1/2} < \infty \;\;\; \mathrm{a.e.}$
\item[(viii)] If $Z^t f$ is continuous  on $\BC^2$, then $Z^t f$ has a zero in $Q\times Q$.
\end{enumerate}
\end{lem}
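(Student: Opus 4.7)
Properties (i) and (ii) are direct manipulations of the defining series. For (i), substituting $z+1$, $z+i$, $w+1$, or $w+i$ into the series for $Z^tf$ and reindexing $k\in\BZ^2$ (using $e^{2\pi i k_j}=1$ for $k_j\in\BZ$) pulls out exactly the stated multiplicative factors $e^{2\pi is}$, $e^{-2\pi ir}$, and $1$. For (ii), substituting the definition of $T^t_{(m,n)}f$ into the series for $Z^t(T^t_{(m,n)}f)$, reindexing $k\mapsto k+(m,n)$, and noting that the integer cross-term $e^{-2\pi i(nk_1-mk_2)}$ equals $1$ collects the surviving phases into the factor $e^{2\pi i(nx-my+nr-ms)}Z^tf(z,w)$.

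Properties (iii)--(vii) reduce, via the unitarity of $Z^t:L^2(\BC)\to L^2(Q\times Q)$ established just before the lemma, to the corresponding statements about $\{\chi_{m,n}\,Z^tf\}$ in $L^2(Q\times Q)$, where $\chi_{m,n}(z,w)=e^{2\pi i(nx-my+nr-ms)}$ is the multiplier produced in (ii). Following the classical Zak-transform template in Christensen \cite{chr03}, the frame inequalities $A\|g\|^2\le\sum_{m,n}|\langle g,T^t_{(m,n)}f\rangle|^2\le B\|g\|^2$ become, after expanding $(Z^tg)\overline{Z^tf}$ against the $\chi_{m,n}$ family and applying Parseval, the pointwise bound $A\le|Z^tf|^2\le B$ a.e., giving (v). Specializing to $A=B=1$ gives (vi); (iii) is the degenerate lower-bound-free limit; (iv) combines (iii) with the square-integrability of the biorthogonal dual $1/Z^tf$; and (vii) together with the exactness clause of (v) follow because a frame of the form $\{\chi_{m,n}Z^tf\}$ with $|Z^tf|$ bounded away from zero is automatically a Riesz basis, the $\chi_{m,n}$ being orthonormal.

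For (viii), assume for contradiction that $Z^tf$ is continuous on $\BC^2$ and nowhere vanishing on $Q\times Q$. By the quasi-periodicity in (i) it is then nowhere zero on all of $\BC^2$, so a continuous global branch of $\log Z^tf$ exists on the simply connected space $\BC^2$. Tracking the increment of its imaginary part around the boundary of a suitable $2$-cycle in $(z,w)$-space that couples a $z$-loop with a $w$-loop, and applying the $w$-dependent quasi-periodicity multipliers from (i) on each edge, produces a total phase increment incompatible with single-valuedness. The main obstacle is precisely this step: on a fixed $w$-slice the $z$-cocycles $e^{2\pi is}$ and $e^{-2\pi ir}$ are constants, so the associated line bundle over the $z$-torus is topologically trivial and admits nowhere-vanishing sections; the required obstruction must instead be extracted from a $2$-cycle coupling $z$ with $w$, exploiting that the cocycles wind once around the unit circle as $s$ (resp.\ $r$) traverses $[0,1]$. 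This adapts the classical BLT winding argument to the four-dimensional twisted setting.
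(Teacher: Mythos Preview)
Your plan is correct and matches the paper's approach: parts (i)--(vii) are likewise referred to the classical Zak-transform arguments in \cite{ben95,chr03,gro01,jan88}, and for (viii) the paper executes precisely the phase argument you outline. Concretely, it writes $Z^tf=|Z^tf|e^{2\pi i\varphi}$ with $\varphi$ continuous on the simply connected $\BC^2$, then telescopes $\varphi$ around the rectangle with vertices $(z,w)\in\{(0,1),(0,i),(i,i),(i,1)\}$, using the $w$-periodicity $Z^tf(z,w+1)=Z^tf(z,w+i)$ together with $Z^tf(z+i,w)=e^{-2\pi ir}Z^tf(z,w)$ evaluated at $r=0$ versus $r=1$ to obtain the contradiction $0=-2\pi$---exactly the $z$--$w$ coupling you identified as the source of the obstruction.
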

\begin{proof}
The proof of the lemma follows similarly as in the Zak transform for $L^2(\BR)$ (see \cite{ben95, chr03, gro01, jan88}). We only prove part (viii). Assume that $Z^tf(z,w)\neq0$ for all $(z,w)\in \mathbb{C}^2.$ Since $Z^tf$ is continuous on a simply connected domain $\mathbb{C}^2$, there is a continuous function $\varphi(z,w)$ such that $$Z^tf(z,w)=|Z^tf(z,w)|e^{2\pi i\varphi(z,w)}\quad\mbox{for}\quad(z,w)\in [0,1]^2\times [0,1]^2.$$ By part (i), we have $Z^tf(z+i,w)=e^{-2\pi i r}Z^tf(z,w)$ and $Z^tf(z,w+1)=Z^tf(z,w+i)$. Therefore for each $z$ and $w$ there are integers $l_z$ and $k_w$ such that $\varphi(z,1)=\varphi(z,i)+2\pi l_z$ and $\varphi(i,w)=\varphi(0,w)+2\pi k_w-2\pi r$. Since $\varphi(z,1)-\varphi(z,i)$ and $\varphi(i,w)-\varphi(0,w)+2\pi r$ are continuous functions of $z$ and $w$ respectively, so $l_z=l$ (say) and $k_w=k$ (say), for all $z,w\in \BC$. Therefore,
\begin{eqnarray*}
0&=&\varphi(0,1)-\varphi(0,i)+\varphi(0,i)-\varphi(i,i)+\varphi(i,i)-\varphi(i,1)+\varphi(i,1)-\varphi(0,1)\\&=&
2\pi l-2\pi k-2\pi l+2\pi k-2\pi= -2\pi,
\end{eqnarray*} a contradiction.
\end{proof}
\subsection{The Amalgam BLT}
In this section we obtain the amalgam BLT (see Theorem 3.2 of \cite{ben95}) for the Weyl transform in terms of $W(C_0,\ell^1)$ and a subspace of $\mathcal{B}_2$ using certain properties of the twisted Zak transform.
\begin{defn}
The Wiener amalgam space $W(L^p,\ell^q)$ is the Banach space of all complex-valued measurable functions $f : \BR^d \rightarrow \BC$ for which the norm
\begin{equation} \label{eqa}
\Vert f \Vert := \left(\sum_{k \in \BZ^d} \Vert f \cdot T_{k} \chi_{[0,1)^d} \Vert _p^q \right)^{1/q}  < \infty,
\end{equation}
with the obvious modification for $q= \infty.$
\end{defn}
For $p\geq 1$, we consider the amalgam space $ W(C_0,\ell^p)=\{f\in W(L^\infty,\ell^p):f \mbox{~is~ continuous}\}.$
Clearly, $ W(C_0,\ell^1)\subseteq L^1(\mathbb{R}^d)\cap L^2(\mathbb{R}^d)\cap C_0(\mathbb{R}^d).$ Now we are in a position to prove Theorem \ref{ABLT}.

\noindent{\bf Proof of Theorem \ref{ABLT}:} Suppose that $g\in W(C_0,\ell^1)$. Then by the definition of the twisted Zak transform, $Z^tg$ is continuous. By Lemma \ref{lem1} (viii), $Z^tg$ must have a zero. Therefore $|Z^tg|^{-1}$ is unbounded and by Lemma \ref{lem1} (v), $\mathcal{G}^t(g, 1,1)$ cannot be a frame.
Again assume that $\mathcal{G}^t(g, 1,1)$ is an exact frame and $W(g)\in \mathcal{W}\subset \mathcal{B}_2$. So by the inversion formula for Weyl transform  we have $g(z)={tr}(\pi(z)^*W(g))$ and $g\in W(C_0,\ell^1)$, leads to a contradiction. \qed

\noindent{\bf Proof of Corollary \ref{C1}:}
Assume that $L^{\frac{1}{2}}g\in W(C_0,\ell^1).$ Setting $g_k (z)=g(z)\cdot \chi_{[k,k+1]^2}(z)$ we have $|g_k(z)|=|L^{-\frac{1}{2}}L^{\frac{1}{2}}g_k(z)|\leq \sum_{m,n}|\langle L^{\frac{1}{2}}g_k,\phi_{m,n}\rangle|| L^{-\frac{1}{2}}\phi_{m,n}|\leq \|L^{\frac{1}{2}}g_k\|_2\leq C\|L^{\frac{1}{2}}g_k\|_\infty.$ Therefore $g\in W(C_0,\ell^1)$, contradicting to Theorem \ref{ABLT}. Further assuming $W(g)H^{\frac{1}{2}}\in \mathcal{W}$ would imply $h(z)={tr}(\pi(z)^*W(g)H^{\frac{1}{2}}) ~\mbox{and}~ h=L^{\frac{1}{2}}g\in W(C_0,\ell^1)$. This proves (i).

For part (ii), if $\bar{Z}ZL^{-\frac{1}{2}}g, Z\bar{Z}L^{-\frac{1}{2}}g\in W(C_0,\ell^1)$, then $L^{\frac{1}{2}}g\in W(C_0,\ell^1)$, contradicting to part (i).
\qed

\noindent{\bf Proof of Theorem \ref{T1}:}  Notice that the fundamental theorem of calculus for complex variables and ML - inequality hold for Schwartz class functions on $\mathbb{C}$.
 We claim that $g\in W(C_0,\ell^2)$. To prove the claim it is sufficient to show $\sum_k|g(z_k+k)|^2<\infty$, for every sequence $\{z_k\}\in[0,1]\times[0,1].$ Since $g$ is a Schwartz class function on $\mathbb{C}$, we have \begin{eqnarray}\label{e-1} \sum_k|g(z+k)|^2<\infty,~a.e. ~\mbox{on}~ [0,1]\times[0,1].\end{eqnarray} For a fixed $z_0\in [0,1]\times[0,1]$ and any sequence $\{z_k\}\in[0,1]\times[0,1]$, using (\ref{e-1}) we get \begin{eqnarray}\nonumber\left(\sum_k|g(z_k+k)|^2\right)^{\frac{1}{2}}&\leq& \left(\sum_k|g(z_k+k)-g(z_0+k)|^2\right)^{\frac{1}{2}}+\left(\sum_k|g(z_0+k)|^2\right)^{\frac{1}{2}}\\&=&
\label{partial}\left(\sum_k\left|\int_{z_0}^{z_k}\partial g(z+k)\, dz\right|^2 \right)^{\frac{1}{2}}+\left(\sum_k|g(z_0+k)|^2\right)^{\frac{1}{2}}\nonumber\\
& = &\label{Z}\left(\sum_k\left|\int_{\gamma_k}\left(Z-\frac{\bar{z}}{2}\right) g(z+k)\,dz \right|^2\right)^{\frac{1}{2}}+\left(\sum_k|g(z_0+k)|^2\right)^{\frac{1}{2}}\\
& \leq &\nonumber \sqrt{2} \left(\sum_k(M_k+m_k)^2\right)^{\frac{1}{2}}+\left(\sum_k|g(z_0+k)|^2\right)^{\frac{1}{2}}\\
&\leq&\nonumber \sqrt{2}\left[\left(\sum_k M_k^2\right)^{\frac{1}{2}}+\left(\sum_k m_k^2\right)^{\frac{1}{2}}\right]+\left(\sum_k|g(z_0+k)|^2\right)^{\frac{1}{2}}<\infty,\end{eqnarray} where
$\gamma_k$ is the straight line  joining  the points $z_0$ and $z_k$, with $$\displaystyle M_k=\mathrm{ess}\sup_{z\in \gamma_k}|Zg(z)|, ~\mbox{and}~ 2m_k=\displaystyle \mathrm{ess} \sup_{z\in\gamma_k}|zg(z)|.$$  Observe that $\displaystyle\sum_kM_k^2$ and $\displaystyle\sum_km_k^2$ are finite, since $g$ satisfies (\ref{1}). Without loss of generality we choose the curve $\gamma_k$, because the fundamental theorem of calculus for complex variables assures that the complex line integral is independent of path. Therefore $g\in W(C_0,\ell^2)$. Using this fact and the definition of twisted Zak transform yields $Z^tg$ is continuous on $\mathbb{C}$. Thus $\mathcal{G}^t(g, 1 ,1)$ cannot be a twisted Gabor frame for $L^2(\mathbb{C})$ (see Lemma \ref{lem1} (v) and (viii)).

 If $\partial_z g$ or $\partial_{\bar{z}} \bar{g}$ or $\bar{Z}\bar{g}$ is in $W(C_0,\ell^2)$, the proof follows by a similar argument with appropriate modification in $(\ref{Z})$.
 Using the fact that the operator $ZL^{-\frac{1}{2}}$ is bounded on $L^2(\mathbb{C})$ (see Theorem 2.2.2, page 37 in \cite{tha93}),
 we obtain  $\|Zg(\cdot+k)\chi_{\gamma_k}\|_2=\|ZL^{-\frac{1}{2}}L^\frac{1}{2}g(\cdot+k)\chi_{\gamma_k}\|_2\leq C\|L^\frac{1}{2}g(\cdot+k)\chi_{\gamma_k}\|_2$.
 If $L^\frac{1}{2}g \in W(C_0,\ell^2)$, then proceeding as above we get $g \in W(C_0,\ell^2)$.
 \qed
\section{Proofs of the BLTs}\label{4}
  In this section we estimate the oscillations of the twisted Zak transform on $ L^2(\mathbb{C})$ in terms of $\|Zf\|_2$ and $\|\bar{Z}f\|_2$ using the Weyl transform to prove Theorem \ref{th01} and Theorem \ref{th001}. But
  unlike the Fourier transform of functions in $L^1(\mathbb{R})$, the Weyl transform of functions in $L^1(\mathbb{C})$ are bounded operators on $L^2(\mathbb{R})$.
  So estimating the oscillations in this case is not easy as compare to the estimates obtained in the real variables (see \cite{ben95,dau90}).
  After estimating the oscillations of the twisted Zak transform on $ L^2(\mathbb{C})$, we proceed along the similar idea used in \cite{ben95,dau90} in our setup to prove our main results. We start with the following lemma.
\begin{lem}\label{ineq}
Let $f,Zf$ and $\bar{Z}f\in L^2(\mathbb{C}).$ Fix $\epsilon=(\epsilon_1,\epsilon_2)\in \BC$. If $\tilde{f}(z)=f(z)e^{2\pi i(y\epsilon_1-x\epsilon_2)}$ and $\tau_\epsilon f(z)=f(z-\epsilon)$, then there exists $C>0$ such that
\begin{enumerate}
\item[(i)]  $\|\tau_\epsilon f-f\|_2\leq C|\epsilon|\left(\|Zf\|_2+\|\bar{Z}f\|_2+(1+|\epsilon|)\|f\|_2\right).$
\item[(ii)] $\|T^t_\epsilon f-f\|_2\leq C|\epsilon|\left(\|Zf\|_2+\|\bar{Z}f\|_2+(1+|\epsilon|)\|f\|_2\right).$
\item[(iii)] $\|\tilde{f}-f\|_2\leq C|\epsilon|\left(\|Zf\|_2+\|\bar{Z}f\|_2+(1+|\epsilon|)\|f\|_2\right)$.
\end{enumerate}
\end{lem}
\begin{proof}
In order to prove (i) we first calculate $W(\tilde{f}).$
 For $\phi\in L^2(\mathbb{R})$, we consider \begin{eqnarray}\label{til}\nonumber W(\tilde{f})\phi(\xi)&=&\displaystyle\int_{\mathbb{C}}f(z)e^{2\pi i(y\epsilon_1-x\epsilon_2)}e^{4\pi i(x\xi+\frac{1}{2}xy)}\phi(\xi+y)\,dxdy \\&=& \nonumber e^{-2\pi i\xi\epsilon_1}\int_{\mathbb{C}}f(z)e^{4\pi i(x(\xi-\epsilon_2/2)+\frac{1}{2}xy)}[e^{(2\pi i(\xi+y)\epsilon_1)}\phi(\xi+y)]\,dxdy\\&=& \pi(-\epsilon_1,0)W(f) \pi(\epsilon_1,0)\phi(\xi).\end{eqnarray}
Since
 $$W(T^t_\epsilon f)\phi(\xi)=\displaystyle\int_{\mathbb{C}}f(z-\epsilon)e^{2\pi i(y\epsilon_1-x\epsilon_2)}e^{4\pi i(x\xi+\frac{1}{2}xy)}\phi(\xi+y)\,dxdy,$$

 substituting $w=z-\epsilon$ and using (\ref{til}), we get \begin{eqnarray}\label{trans} W(T^t_\epsilon f)\phi(\xi)&=&  \pi(\epsilon_1,\epsilon_2)W(\tilde{f}) \phi(\xi).\end{eqnarray}
Again $$ W(\tau_\epsilon f)\phi(\xi)=\displaystyle\int_{\mathbb{C}}f(z-\epsilon)e^{4\pi i(x\xi+\frac{1}{2}xy)}\phi(\xi+y)\,dxdy,$$
 substituting $w=z-\epsilon/2,$ we have \begin{eqnarray}\nonumber W(\tau_\epsilon f)\phi(\xi)&=&\pi\left(\frac{\epsilon_1}{2},\frac{\epsilon_2}{2}\right)W(T^t_\frac{\epsilon}{2}f)\phi(\xi). \end{eqnarray}
Using (\ref{trans}) and  (\ref{til}), we obtain \begin{eqnarray}\nonumber W(\tau_\epsilon f)\phi(\xi)&=& \pi\left(\frac{\epsilon_1}{2},\frac{\epsilon_2}{2}\right)\pi\left(\frac{\epsilon_1}{2},\frac{\epsilon_2}{2}\right)\pi(-\epsilon_1,0)W(f)\pi(\epsilon_1,0) \phi(\xi)\\&=& e^{-2\pi i\epsilon_1\epsilon_2}W(f)\pi(\epsilon_1,0)\phi(\xi+\epsilon_2).\end{eqnarray}

Therefore \begin{eqnarray}\label{esti 1}\nonumber|W(\tau_\epsilon f)\phi(\xi)-W(f)\phi(\xi)|&=&|e^{-2\pi i\epsilon_1\epsilon_2}W(f)\pi(\epsilon_1,0)\phi(\xi+\epsilon_2)-W(f)\phi(\xi)|\\&\leq&
C|\epsilon||W(f)\pi(\epsilon_1,0)\phi(\xi+\epsilon_2)|+|W(f)(\pi(\epsilon_1,0)\phi(\xi+\epsilon_2)-\phi(\xi))|.\end{eqnarray}
By taking $g$ such that $\hat{g}=\phi$ and applying mean value theorem, we have \begin{eqnarray*}\pi(\epsilon_1,0)\phi(\xi+\epsilon_2)-\phi(\xi)&=&\widehat{(\tau_{2\epsilon_1}g)}(\xi+\epsilon_2)-\hat{g}(\xi)\\&=&2\epsilon_1 \widehat{\frac{dg}{d\xi}}(\xi+\epsilon_2+2\theta\epsilon_1)+\epsilon_2\frac{d\hat{g}}{d\xi}(\xi+\theta'\epsilon_2),
\end{eqnarray*} for some $\theta,\theta'\in (0,1).$
 Then there exists constants $C_1,C_2,C_3>0$ such that\begin{eqnarray}\label{esti 2}\nonumber|W(f)(\pi(\epsilon_1,0)\phi(\xi+\epsilon_2)-\phi(\xi))|&\leq& \nonumber C_1|\epsilon|^2|W(f)\phi(\xi+\epsilon_2+2\theta\epsilon_1)|\\&+&\nonumber C_2|\epsilon||W(f)(A^*+A)\phi(\xi+\epsilon_2+2\theta\epsilon_1)|\\&+&C_3|\epsilon||W(f)(A^*-A)\phi(\xi+\theta'\epsilon_2)|.\end{eqnarray} Computing $L^2-$norm of $W(\tau_\epsilon f-f)\phi$ using (\ref{esti 1}) and (\ref{esti 2}), we get
\begin{eqnarray}\label{esti 3}&&\nonumber\|W(\tau_\epsilon f)\phi-W(f)\phi\|_2\\&\leq& C|\epsilon|(\|W(f)\pi(\epsilon_1,0)\phi\|_2+|\epsilon|\|W(f)\phi\|_2+\|W(f)A^*\phi\|_2+\|W(f)A\phi\|_2).\end{eqnarray}
Since the operator $\phi\mapsto \pi(\epsilon_1,0)\phi$ is unitary on $L^2(\mathbb{R})$, calculating the Hilbert-Schmidt norm of $W(\tau_\epsilon f-f)$ using the orthonormal bases $\{\phi_{m,n}\}$ (defined in (\ref{sh})) and $\{\pi(\epsilon_1,0)\phi_{m,n}\}$ (only for the first term after the inequality in (\ref{esti 3})), we obtain
\begin{eqnarray*}\|\tau_\epsilon f-f\|_2 = 2\|W(\tau_\epsilon f-f)\|_{\mathcal{B}_2}\leq C|\epsilon|((1+|\epsilon|)\|f\|_2+\|Zf\|_2+\|\bar{Z}f\|_2).
\end{eqnarray*}
In order to prove (ii), we use (\ref{trans}) and (\ref{til}) to get \begin{eqnarray*}W(T^t_\epsilon f)\phi(\xi)&=&\pi(\epsilon_1,\epsilon_2)\pi(-\epsilon_1,0)W(f) \pi(\epsilon_1,0) \phi(\xi)\\&=&e^{-2\pi i\epsilon_1\epsilon_2}W(f) \pi(\epsilon_1,0) \phi(\xi+\epsilon_2).\end{eqnarray*} Proceeding exactly as in part (i), we get
\begin{eqnarray}\label{zz}\|T^t_\epsilon f-f\|_2 = \|W(T^t_\epsilon f-f)\|_{\mathcal{B}_2}\leq C|\epsilon|((1+|\epsilon|)\|f\|_2+\|Zf\|_2+\|\bar{Z}f\|_2).
\end{eqnarray}
Finally for the part (iii) we use (\ref{zz}) and part (i) to get \begin{eqnarray*}\|\tilde{f}-f\|_2\leq \|\tilde{f}-T^t_\epsilon f\|_2+\|T^t_\epsilon f -f\|_2\leq C|\epsilon|((1+|\epsilon|)\|f\|_2+\|Zf\|_2+\|\bar{Z}f\|_2).\end{eqnarray*}
\end{proof}
We use the following notation to estimate the upper bound for the oscillation of the twisted Zak transform over small cubes. Let $x=(t,w)\in \mathbb{R}^2$ and $r>0$. Then $Q(x;r)$ is the square centered at $x$ with radius $r$, i.e. \begin{eqnarray*}Q(x;r)& = & \left[t-\frac{r}{2},t+\frac{r}{2}\right]\times\left[w-\frac{r}{2},w+\frac{r}{2}\right]\\&=& \left\{(u,v): u\in\left[t-\frac{r}{2},t+\frac{r}{2}\right], v\in \left[w-\frac{r}{2},w+\frac{r}{2}\right] \right\}.\end{eqnarray*} Thus the square $Q=[0,1)\times[0,1)$ can be represented as $Q(\frac{1}{2},\frac{1}{2};1)$.
\begin{thm}\label{est} Let $f,Zf$, $\bar{Z}f\in L^2(\mathbb{C}),~G=Z^tf,~\alpha_0=(z_0,w_0)\in Q(z_0;1)\times Q(w_0;1):=Q[\alpha_0,1], z_0\in[-\frac{3}{2},\frac{3}{2}]\times[-\frac{3}{2},\frac{3}{2}]$ and $w_0,\epsilon \in \mathbb{C}$ be given. Then
$$\|T_{\epsilon,1}^tG-G\|_{L^2(Q[\alpha_0,1])}\leq C|\epsilon|\left(\|Zf\|_2+\|\bar{Z}f\|_2+(1+|\epsilon|)\|f\|_2\right),$$ and $$\|T_{\epsilon,2}^tG-G\|_{L^2(Q[\alpha_0,1])}\leq C|\epsilon|(\|Zf\|_2+\|\bar{Z}f\|_2+(1+|\epsilon|)\|f\|_2),$$ where $T_{\epsilon,j}^tG(z,w)$ is the twisted translation of $G$ in the $j$th variable for $j=1,2.$
\end{thm}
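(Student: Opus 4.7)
The plan is to split each difference $T^t_{\epsilon,j}G - G$ into two pieces: a pointwise phase factor multiplying either a shifted copy of $G$ or $Z^t$ of a modulated $f$, whose $L^2(Q[\alpha_0,1])$-norm is controlled using the periodicity of $|Z^tf|$ together with the hypothesis $z_0\in[-\tfrac32,\tfrac32]^2$ (so that $|x|,|y|\leq 2$ on $Q(z_0,1)$ and hence $|\epsilon_2 x-\epsilon_1 y|,\;|y\epsilon_1-x\epsilon_2|\leq 4|\epsilon|$); and a piece of the form $Z^t(h-f)$, whose $L^2(Q[\alpha_0,1])$-norm collapses to $\|h-f\|_2$ by the unitarity of $Z^t$ and the common Zak quasi-periodicity from Lemma~\ref{lem1}(i). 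The two pieces are then estimated using $|e^{i\theta}-1|\leq|\theta|$ and Lemma~\ref{ineq}, respectively.

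For part~(i), I interpret $T^t_{\epsilon,1}G(z,w)=e^{2\pi i(\epsilon_2 x-\epsilon_1 y)}G(z-\epsilon,w)$ (twisted translation in the first complex argument of $G$) and write
\[
T^t_{\epsilon,1}G-G = \bigl[e^{2\pi i(\epsilon_2 x-\epsilon_1 y)}-1\bigr]G(z-\epsilon,w) + \bigl[G(z-\epsilon,w)-G(z,w)\bigr].
\]
The first bracket has pointwise modulus at most $8\pi|\epsilon|$, and since $|G|$ is periodic in $z$ by Lemma~\ref{lem1}(i), a shift in $z$ does not alter the integral: $\|G(z-\epsilon,w)\|_{L^2(Q[\alpha_0,1])}=\|Z^tf\|_{L^2(Q\times Q)}=\|f\|_2$. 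For the second bracket, expanding the defining series yields the intertwining identity $G(z-\epsilon,w)=Z^t(\tau_\epsilon f)(z,w)$ where $\tau_\epsilon f(z):=f(z-\epsilon)$, so it equals $Z^t(\tau_\epsilon f - f)(z,w)$; its modulus is periodic in all four real coordinates, making its $L^2(Q[\alpha_0,1])$-norm equal to $\|\tau_\epsilon f - f\|_2$, which Lemma~\ref{ineq}(ii) estimates.

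For part~(ii), I interpret $T^t_{\epsilon,2}G(z,w)=G(z,w-\epsilon)$, and the analogous intertwining identity is
\[
G(z,w-\epsilon)=e^{2\pi i(y\epsilon_1-x\epsilon_2)}\,Z^t\tilde f_{-\epsilon}(z,w),\qquad \tilde f_{-\epsilon}(z):=f(z)\,e^{-2\pi i(y\epsilon_1-x\epsilon_2)},
\]
obtained by expanding $Z^t\tilde f_{-\epsilon}$, pulling out the phase $e^{-2\pi i(y\epsilon_1-x\epsilon_2)}$, and recognising the residual sum as $G(z,w-\epsilon)$ via $\mathrm{Im}((w-\epsilon)\bar k)=(s-\epsilon_2)k_1-(r-\epsilon_1)k_2$. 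Substituting and subtracting $G(z,w)=Z^tf(z,w)$ gives
\[
T^t_{\epsilon,2}G-G=\bigl[e^{2\pi i(y\epsilon_1-x\epsilon_2)}-1\bigr]Z^t\tilde f_{-\epsilon}(z,w)+Z^t(\tilde f_{-\epsilon}-f)(z,w).
\]
The first summand is bounded by $8\pi|\epsilon|\cdot\|Z^t\tilde f_{-\epsilon}\|_{L^2(Q[\alpha_0,1])}=8\pi|\epsilon|\,\|\tilde f_{-\epsilon}\|_2=8\pi|\epsilon|\,\|f\|_2$ because modulation preserves the $L^2$-norm; the second equals $\|\tilde f_{-\epsilon}-f\|_2$, which Lemma~\ref{ineq}(i) applied with $-\epsilon$ in place of $\epsilon$ (legitimate since $N_\epsilon$ depends only on $|\epsilon|$) bounds by $2\pi|\epsilon|(1+|\epsilon|)^{N_\epsilon}\|f\|_2$.

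The only non-routine step is the verification of the two intertwining identities, which tie a plain shift of $G$ in its first (respectively second) complex argument back to a translation (respectively phase-twisted modulation) of $f$. Everything else is a transparent combination of the triangle inequality, the bound $|e^{i\theta}-1|\leq|\theta|$, the quasi-periodicity from Lemma~\ref{lem1}(i), the unitarity of $Z^t$, and Lemma~\ref{ineq}.
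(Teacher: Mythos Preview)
Your treatment of the first inequality is correct and coincides with the paper's: both split $T^t_{\epsilon,1}G-G$ into the phase piece $(e^{2\pi i\,\mathrm{Im}(\bar z\epsilon)}-1)Z^t(\tau_\epsilon f)$ and the piece $Z^t(\tau_\epsilon f)-Z^tf$, bound the first by $8\pi|\epsilon|\|f\|_2$ using $|x|,|y|\le 2$ on $Q(z_0,1)$, and invoke Lemma~\ref{ineq}(ii) for the second.

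For the second inequality your interpretation of $T^t_{\epsilon,2}$ differs from the paper's. By Definition~3.1 the twisted translation carries a phase, so ``twisted translation in the second variable'' is
\[
T^t_{\epsilon,2}G(z,w)=e^{2\pi i\,\mathrm{Im}(\bar w\epsilon)}G(z,w-\epsilon),
\]
not the plain shift $G(z,w-\epsilon)$ you analyze. Your intertwining identity for $G(z,w-\epsilon)$ is correct, and your $\tilde f_{-\epsilon}$ is exactly the paper's $h$; but once the missing factor $e^{2\pi i\,\mathrm{Im}(\bar w\epsilon)}$ is restored, it combines with your phase $e^{-2\pi i\,\mathrm{Im}(\bar z\epsilon)}$ to give the paper's
\[
T^t_{\epsilon,2}G(z,w)=e^{2\pi i\,\mathrm{Im}\bigl(\overline{w-z}\,\epsilon\bigr)}\,Z^th(z,w).
\]
The key consequence is that this exponent now involves $w$, and since $w_0\in\mathbb{C}$ is unrestricted the pointwise estimate $|e^{i\theta}-1|\le 8\pi|\epsilon|$ that you use is no longer available. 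The paper handles this term with the crude bound $|e^{2\pi i\,\mathrm{Im}(\overline{w-z}\,\epsilon)}-1|\le 2\le 2\pi|\epsilon|(1+|\epsilon|)^{N_\epsilon}$, which produces the $2\pi|\epsilon|(1+|\epsilon|)^{N_\epsilon}\|f\|_2$ summand, and assigns the $8\pi|\epsilon|\|f\|_2$ summand instead to $\|Z^th-Z^tf\|_{L^2(Q[\alpha_0,1])}$. In short, your argument for part~(ii) proves the stated bound for the wrong operator; with the correct twisted phase the two summands in the target inequality swap roles and your $8\pi|\epsilon|$ step fails.
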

\begin{proof}
Notice that $T_{\epsilon,1}^tG(z,w)=e^{2\pi i Im(z\bar{\epsilon})}Z^t(\tau_{\epsilon}f)(z,w)$. Then by using the fact that the twisted Zak transform $Z^t$ is an unitary operator of $L^2(\mathbb{C})$ onto $L^2(Q[\alpha_0,1])$, we get
\begin{eqnarray*}
&& \|T_{\epsilon,1}^tG-G\|_{L^2(Q[\alpha_0,1])}\\&\leq& \|T_{\epsilon,1}^tG-Z^t(\tau_{\epsilon}f)\|_{L^2(Q[\alpha_0,1])}+\|Z^t(\tau_{\epsilon}f)-G\|_{L^2(Q[\alpha_0,1])}\\
 &=&\|(e^{2\pi i Im(z\bar{\epsilon})}-1)Z^t(\tau_{\epsilon}f)\|_{L^2(Q[\alpha_0,1])}+\|Z^t(\tau_{\epsilon}f)-Z^tf\|_{L^2(Q[\alpha_0,1])}\\
 &\leq&8\pi |\epsilon|\|f\|_2+\|\tau_{\epsilon}f-f\|_2\\
 &\leq& C|\epsilon|\left(\|Zf\|_2+\|\bar{Z}f\|_2+(1+|\epsilon|)\|f\|_2 \right),
 \end{eqnarray*} by Lemma \ref{ineq} (i).
 But \begin{eqnarray*}
 T_{\epsilon,2}^tG(z,w)&=& G(z,w-\epsilon)e^{2\pi i Im(w\bar{\epsilon})}=\sum_{k\in \mathbb{Z}^2} f(z-k)e^{2\pi i Im((w-\epsilon)\bar{k})}e^{2\pi i Im(w\bar{\epsilon})}\\
 &=& e^{-2\pi i Im(\bar{z}\epsilon)}\sum_{k\in\mathbb{Z}^2}f(z-k)e^{2\pi i Im((\overline{z-k})\epsilon)}e^{2\pi i Im(w\bar{k})}e^{2\pi i Im(w\bar{\epsilon)}}\\
 &=&  e^{2\pi i Im((w+z)\overline{\epsilon})}\sum_{k\in\mathbb{Z}^2}h(z-k)e^{2\pi i Im(w\bar{k})}\\
 &=& e^{2\pi i Im((w+z)\overline{\epsilon})}Z^th(z,w),
 \end{eqnarray*} where $h(z)=f(z)e^{2\pi i Im(\bar{z}\epsilon)}=f(z)e^{-2\pi i Im(z\bar{\epsilon})}.$
 Therefore \begin{eqnarray}\nonumber\|T_{\epsilon,2}^tG-G\|_{L^2(Q[\alpha_0,1])}&\leq& \|Z^t(h-f)\|_{L^2(Q[\alpha_0,1])}+\|(e^{2\pi i Im((w+z)\overline{\epsilon})}-1)Z^tf\|_{L^2(Q[\alpha_0,1])}\\
  &\leq& \label{nu}\|h-f\|_2+2\pi|\epsilon|\|Z^tf\|_{L^2(Q[\alpha_0,1])}.\end{eqnarray}
 Applying Lemma \ref{ineq} (iii) for $\epsilon=(-\epsilon_1,-\epsilon_2)$ in (\ref{nu}), we get $$\|T_{\epsilon,2}^tG-G\|_{L^2(Q[\alpha_0,1])}\leq C|\epsilon|(\|Zf\|_2+\|\bar{Z}f\|_2+(1+|\epsilon|)\|f\|_2).$$ 
\end{proof}
\begin{cor}\label{cor}
Let $f,Zf,\bar{Z}f\in L^2(\mathbb{C}),~G=Z^tf$. For $0<r<1,$ fix $~\alpha_0=(z_0,w_0)\in Q(z_0,r)\times Q(w_0,r):=Q[\alpha_0,r], z_0\in[-\frac{3}{2},\frac{3}{2}]\times[-\frac{3}{2},\frac{3}{2}]$ and $w_0,\epsilon \in \mathbb{C}$. Then
\begin{eqnarray}\label{a}\int_{Q[\alpha_0,r]}|T_{\epsilon,1}^tG(z,w)-G(z,w)|dzdw\leq r^2|\epsilon|C_{1,f}^{\epsilon}(r)\end{eqnarray} and \begin{eqnarray}\label{b}\int_{Q[\alpha_0,r]}|T_{\epsilon,2}^tG(z,w)-G(z,w)|dzdw\leq r^2|\epsilon|C_{2,f}^{\epsilon}(r),\end{eqnarray}
 where $T_{\epsilon,j}^tG(z,w)$ is the twisted translation of $G$ in the $j$th variable with $\displaystyle\lim_{r\to 0} C_{j,f}^\epsilon(r)=0$ for $j=1,2$.
\end{cor}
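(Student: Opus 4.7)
The plan is to obtain both estimates from Cauchy--Schwarz (to trade $L^1$ for $L^2$ and pick up the factor $r^2$), Theorem \ref{est} (for finiteness of the constants), and the uniform absolute continuity of the Lebesgue integral (to force $C_{j,f}^\epsilon(r)\to 0$).

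First I would note that $Q[\alpha_0,r]\subset\mathbb{R}^4$ has Lebesgue measure $r^4$, so Cauchy--Schwarz yields
\[
\int_{Q[\alpha_0,r]}|T_{\epsilon,j}^tG-G|\,dz\,dw \;\leq\; r^2\,\|T_{\epsilon,j}^tG-G\|_{L^2(Q[\alpha_0,r])}, \quad j=1,2.
\]
Accordingly I would set
\[
C_{j,f}^\epsilon(r) \;:=\; \frac{1}{|\epsilon|}\,\sup_{\alpha_0}\,\|T_{\epsilon,j}^tG-G\|_{L^2(Q[\alpha_0,r])},
\]
with the supremum running over all admissible $\alpha_0=(z_0,w_0)$ allowed by the hypotheses. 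By the (quasi-)periodicity of $G$ recorded in Lemma \ref{lem1}(i), this supremum is equivalent to one over a bounded set of centers, so every cube $Q[\alpha_0,r]$ with $r\leq 1$ is contained in one fixed compact set $K'\subset\mathbb{R}^4$.

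Finiteness of $C_{j,f}^\epsilon(r)$ is immediate from Theorem \ref{est}: since $Q[\alpha_0,r]\subset Q[\alpha_0,1]$, the $L^2$ norm on the smaller cube is dominated by the one on $Q[\alpha_0,1]$, which Theorem \ref{est} bounds by $|\epsilon|\,M^{\epsilon}_{f,j}$ for a constant depending only on $f$ and $\epsilon$. For the decay as $r\to 0$, I would fix $\epsilon$ and invoke the uniform absolute continuity of the Lebesgue integral for the $L^1(K')$ function $|T_{\epsilon,j}^tG-G|^2$: given $\eta>0$ there exists $\delta>0$ such that $\int_E|T_{\epsilon,j}^tG-G|^2<\eta$ whenever $E\subset K'$ has Lebesgue measure less than $\delta$. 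Taking $E=Q[\alpha_0,r]$ with $r^4<\delta$ and passing to the supremum over $\alpha_0$ gives $C_{j,f}^\epsilon(r)\to 0$.

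The main obstacle is confining the supremum over $\alpha_0$ to a compact set, which is what legitimates the uniform absolute continuity step. For $j=2$ this is automatic since $|T_{\epsilon,2}^tG-G|$ is strictly periodic in $w$ (with periods $1$ and $i$). For $j=1$ the corresponding object satisfies only a quasi-periodic relation of the form $|T_{\epsilon,1}^tG(z+1,w)-G(z+1,w)|=|e^{2\pi i\epsilon_2}T_{\epsilon,1}^tG(z,w)-G(z,w)|$; combined with the explicit restriction $z_0\in[-3/2,3/2]^2$ in the statement and the $w$-periodicity, this still keeps all relevant centers inside a compact set, and the absolute continuity argument goes through uniformly.
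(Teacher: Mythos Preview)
Your overall strategy---Cauchy--Schwarz to extract the factor $r^2$, then uniform absolute continuity of $|T_{\epsilon,j}^tG-G|^2$ over cubes sitting in a fixed compact set---is sound and is a genuinely different route from the paper's. The paper does not treat Theorem~\ref{est} as a black box; instead it reopens that computation and inserts the cutoff $\chi_{Q(z_0,r)}$ in the $z$-variable (enlarging the $w$-cube back to side~$1$ and using unitarity of $Z^t$), which produces explicit constants such as $\|f\cdot\chi_{Q(z_0,r)}\|_2$, $\|Z\tilde f\cdot\chi_{Q(z_0,r)}\|_2$, $\|\bar Z\tilde f\cdot\chi_{Q(z_0,r)}\|_2$, and then reads off $C_{j,f}^\epsilon(r)\to 0$ directly from these localized norms. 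Your soft argument is cleaner and avoids redoing any Weyl-transform estimates; the paper's argument buys an explicit formula for $C_{j,f}^\epsilon(r)$.

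There is, however, a slip in your compactness reduction for $j=2$. From $T_{\epsilon,2}^tG(z,w)=e^{2\pi i\,\mathrm{Im}(\bar w\epsilon)}G(z,w-\epsilon)$ and $G(z,w+1)=G(z,w)$ one finds $T_{\epsilon,2}^tG(z,w+1)=e^{2\pi i\epsilon_2}\,T_{\epsilon,2}^tG(z,w)$, so $|T_{\epsilon,2}^tG-G|$ is only \emph{quasi}-periodic in $w$, not strictly periodic as you assert. (By contrast, $T_{\epsilon,1}^tG-G$ really is $w$-periodic, which is why your $j=1$ case goes through once $z_0$ is restricted to $[-3/2,3/2]^2$.) The fix is immediate: dominate pointwise by the $w$-periodic majorant
\[
|T_{\epsilon,2}^tG(z,w)-G(z,w)|^2\;\le\;2\bigl(|G(z,w-\epsilon)|^2+|G(z,w)|^2\bigr),
\]
which is periodic in $w$ by Lemma~\ref{lem1}(i). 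Applying your absolute-continuity argument to this majorant confines all relevant centers to a fixed compact $K'\subset\mathbb{R}^4$ and yields $C_{2,f}^\epsilon(r)\to 0$ exactly as in the $j=1$ case.
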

\begin{proof}
For $z=x+iy\in\mathbb{C}$ and $r>0$, we denote $E(z,r)$ as the set $E(z,r)=\bigcup_{m,n\in \mathbb{Z}}[x-\frac{r}{2}+n, x+\frac{r}{2}+n]\times[y-\frac{r}{2}+m, y+\frac{r}{2}+m].$
Then for $f\in L^2(\mathbb{C})$, we have $Z^tf(z,w)\cdot\chi_{E(z,r)}=Z^t(f\cdot\chi_{E(z,r)})(z,w)$. Proceeding as in the proof of Theorem \ref{est}, we get
$$\|T_{\epsilon,1}^tG-G\|_{L^2(Q[\alpha_0,r])}
 \leq  C|\epsilon|\left(\|Zf\cdot\chi_{E(z_0,r)}\|_2+\|\bar{Z}f\cdot\chi_{E(z_0,r)}\|_2+(1+|\epsilon|)\|f\cdot\chi_{E(z_0,r)}\|_2\right)$$
and $$\|T_{\epsilon,2}^tG-G\|_{L^2(Q[\alpha_0,r])} \leq C|\epsilon|(\|Zf\cdot\chi_{E(z_0,r)}\|_2+\|\bar{Z}f\cdot\chi_{E(z_0,r)}\|_2+(1+|\epsilon|)\|f\cdot\chi_{E(z_0,r)}\|_2).$$ 
Applying Cauchy-Schwartz inequality in the left hand side of (\ref{a}) and (\ref{b}), the proof follows with
$$C_{1,f}^{\epsilon}(r)=C_{2,f}^{\epsilon}(r)=C|\epsilon|\left(\|Zf\cdot\chi_{E(z_0,r)}\|_2+\|\bar{Z}f\cdot\chi_{E(z_0,r)}\|_2+(1+|\epsilon|)\|f\cdot\chi_{E(z_0,r)}\|_2\right). $$
Further, using the fact that $\|f.\chi_{E(z_0,r)}\|_2\to 0$ as $r\to 0$, we have $\displaystyle\lim_{r\to 0}C_{j,f}^\epsilon(r)=0$ for $j=1,2$.
\end{proof}
Now we are in a position to prove Theorem \ref{th01}.

\noindent{\bf Proof of Theorem \ref{th01}:}
Assume that $\{T^t_{(m,n)}g\}$ is an exact frame for $L^2(\BC)$. Then by Lemma \ref{lem1} (v), we have
$0 < A^{1/2} \leq |Z^tg| \leq B^{1/2} < \infty \;\;\mathrm{a.e.}$ Assume that both $Zg$ and $\bar{Z}g \in L^2(\BC).$ We will show our assumption together with Lemma \ref{lem1} (v)
 leads to a contradiction in the following three steps.

\noindent{\bf Step 1:} (Construction of an continuous averaged function $G_r(z,w)$ that approximating $G(z,w)=Z^tg(z,w)$.)
Let $\rho$ be a non-negative infinitely differentiable function with compact support on $\mathbb{R}^4$ with $\int_{\mathbb{R}^4}\rho(\zeta) d\zeta=1.$ For $r>0,$ let $\rho_r(\zeta)=r^{-4}\rho(\frac{\zeta}{r})$. Then $\displaystyle\rho_r(\zeta)$ is an approximate identity for $L^1(\mathbb{C}^2)$ with respect to the twisted convolution (see Lemma 5, page 242 of \cite{Mail}). Without loss of generality we assume that the support of $\rho$ is contained in $[0, 1)^4.$
For $z,w\in \mathbb{C}$, we define $$G_r(z,w)=G\times \rho_r(z,w)=\int_{[0,1)^4}G(z-z',w-w')\rho_r(z',w')e^{2\pi i Im(z\bar{z'}+w\bar{w'})}\,dz'dw'.$$ Then $G_r$ satisfies the following properties:
\begin{enumerate}
\item[(a)] $|G_r(z_1,w_1)-G_r(z_2,w_2)|\leq 2\left(\pi (r+\max\{|z_1|, |w_1|\}) +\frac{1}{r}\right)B^\frac{1}{2}\left(|z_1-z_2|+|w_1-w_2|\right)$.
\end{enumerate}
By Lemma \ref{lem1} (v), we have \begin{eqnarray*}
&&|G_r(z_1,w_1)-G_r(z_2,w_2)|\\&=& \dfrac{1}{r^4}\left|\int_{Q[z_1^*,w_1^*;r]}G(u,v)e^{2\pi i Im(z_1\bar{u}+w_1\bar{v})}\,dudv-\int_{Q[z_2^*,w_2^*;r]}G(u,v)e^{2\pi i Im(z_2\bar{u}+w_2\bar{v})}\,dudv\right|\\
&\leq& \dfrac{1}{r^4}\left|\int_{Q[z_1^*,w_1^*;r]}G(u,v)\left[e^{2\pi i Im(z_1\bar{u}+w_1\bar{v})}-e^{2\pi i Im(z_2\bar{u}+w_2\bar{v})}\right]\,dudv\right|\\
&+& \dfrac{1}{r^4}\left|\int_{Q[z_1^*,w_1^*;r]}G(u,v)e^{2\pi i Im(z_2\bar{u}+w_2\bar{v})}\,dudv-\int_{Q[z_2^*,w_2^*;r]}G(u,v)e^{2\pi i Im(z_2\bar{u}+w_2\bar{v})}\,dudv\right|\\
&\leq& \frac{2 \pi}{r^4}B^\frac{1}{2}\left(|z_1-z_2|(r+|z_1|)+|w_1-w_2| (r+|w_1|)\right)|Q[z_1^*,w_1^*;r]| \\ &+& \frac{1}{r^4}B^\frac{1}{2}\int_{Q[z_1^*,w_1^*;r]\Delta Q[z_2^*,w_2^*;r]}dudv\\&\leq&
2 \pi B^\frac{1}{2}(r+\max\{|z_1|, |w_1|\}) \left(|z_1-z_2|+|w_1-w_2|\right)+ \frac{1}{r^4}B^\frac{1}{2}|Q[z_1^*,w_1^*;r]\Delta Q[z_2^*,w_2^*;r]|\\&\leq& 2\left(\pi (r+\max\{|z_1|, |w_1|\}) +\frac{1}{r}\right)B^\frac{1}{2}\left(|z_1-z_2|+|w_1-w_2|\right),
\end{eqnarray*}
where $\Delta$ is the symmetric difference operator and $Q[z_j^*,w_j^*;r]=Q[z_j-\frac{r}{2}(1+i),w_j-\frac{r}{2}(1+i);r]$, $j=1,2.$
\begin{enumerate}\item[(b) (i)] $G_r(z,w+1)=G_r(z,w)+\psi_{1,r}(z,w)$ and $G_r(z,w+i)=G_r(z,w)+\psi_{2,r}(z,w)$,
\item[(ii)] $G_r(z+1,w)=e^{2\pi i Im(w)}G_r(z,w)+\psi_{3,r}(z,w)$ and $G_r(z+i,w)=e^{-2\pi i Im(iw)}G_r(z,w)+\psi_{4,r}(z,w)$, where $|\psi_{j,r}(z,w)|\leq 2\pi B^{1/2}r,~j=1,2,3,4.$
\end{enumerate}
\begin{eqnarray*}G_r(z,w+1)&=&\int_{[0,1)^4}G(z-z',w+1-w')\rho_r(z',w')e^{2\pi i Im(z\bar{z'}+(w+1)\bar{w'})}\,dz'dw'\\
&=& \int_{[0,1)^4}G(z-z',w-w')\rho_r(z',w')e^{2\pi i Im(z\bar{z'}+w\bar{w'})}\,dz'dw'+\psi_{1,r}(z,w)\\&=&
G_r(z,w)+\psi_{1,r}(z,w).
\end{eqnarray*}where $\psi_{1,r}(z,w)=\displaystyle\int_{[0,1)^4}(e^{2\pi i Im(\bar{w'})}-1)G(z-z',w-w')\rho_r(z',w')e^{2\pi i Im(z\bar{z'}+w\bar{w'})}\,dz'dw'.$

Further \begin{eqnarray*}|\psi_{1,r}(z,w)|&=& \left|\int_{[0,1)^4}(e^{2\pi i Im(\bar{w'})}-1)G(z-z',w-w')\rho_r(z',w') e^{2\pi i Im(z\bar{z'}+w\bar{w'})}\,dz'dw'\right|\\&\leq&
B^{1/2}\int_{[0,1)^4}|2\pi Im(\bar{w'})|\rho_r(z',w')\,dz'dw'\\&\leq& 2\pi B^{1/2}r.
\end{eqnarray*}
Similarly we can obtain the other identities with $|\psi_{j,r}(z,w)|\leq 2\pi B^{1/2}r,~j=2,3,4.$
\begin{enumerate}
\item[(c)] Fix $(z,w),(z',w')\in \mathbb{C}^2$ and using (a) one has \begin{eqnarray*} && |G(z,w)-G_r(z,w)| \geq |G(z,w)|-|G_r(z,w)-G_r(z',w')|-|G_r(z',w')|\\&\geq & A^{\frac{1}{2}}-2B^\frac{1}{2}\left(\pi (r+\max\{|z|, |w|\}) +\frac{1}{r}\right)(|z-z'|+|w-w'|)-|G_r(z',w')|.
    \end{eqnarray*}\end{enumerate}
In particular, for a fixed $(z,w)\in[0,1)^4, c<1$ and $(z,w)\in Q[z',w';cr]$, we have \begin{eqnarray}\label{lower}&& |G(z,w)-G_r(z,w)| \nonumber \\ &\geq &
A^{\frac{1}{2}}-2B^\frac{1}{2}cr\left(\pi (r+\max\{|z|, |w|\}) +\frac{1}{r}\right)-|G_r(z',w')|.\end{eqnarray}

\noindent{\bf Step 2:} For any $(z_0,w_0)\in[0,1)^4,c<1$ and $r<1$, we have
\begin{eqnarray}\label{both} c^4r^4(A^{\frac{1}{2}}-2crc_{z,w}^rB^\frac{1}{2}-|G_r(z',w')|)&\leq&\int_{Q[z,w;cr]}|G(z,w)-G_r(z,w)|dzdw\\
&\leq& c^2r^4 C(r),\nonumber\end{eqnarray} where $c_{z,w}^r=\left(\pi (r+\max\{|z|, |w|\}) +\frac{1}{r}\right)$ and $C(r)$ is independent on the point $(z,w)$ and \begin{eqnarray}\label{cr}\lim\limits_{r\to 0}C(r)=0.\end{eqnarray}
\begin{eqnarray*}
&&\int_{Q[z_0,w_0;cr]}|G(z,w)-G_r(z,w)|\,dzdw\\&\leq& \int_{[0,1)^4}|\rho_r(z',w')|\int_{Q[z_0,w_0;cr]}|G(z,w)-G(z,w-w')e^{2\pi i Im(w\bar{w'})}|dz'dw'dzdw\\&+& \int_{[0,1)^4}|\rho_r(z',w')|\int_{Q[z_0,w_0;cr]}|G(z,w-w') - G(z-z',w-w')e^{2\pi i   Im(z\bar{z'})}|dz'dw' dz dw \\&\leq& \int_{[0,1)^4}|\rho_r(z',w')|\int_{Q[z_0,w_0;cr]}|G(z,w)-G(z,w-w')e^{2\pi i Im(w\bar{w'})}|dz dw dz'dw'\\&+& \int_{[0,1)^4}|\rho_r(z',w')|\int_{Q[z_0,w_0-w';cr]}|G(z,s)-G(z-z',s)e^{2\pi i   Im(z\bar{z'})}|dzds dz'dw'.
\end{eqnarray*}
Using Corollary \ref{cor} and the fact that $|z'|<1, \; |w'|<1$, we have $|C_{1,g}^{z'}(r)|< C_{1,g}(r)$ and $|C_{2,g}^{w'}(r)|< C_{2,g}(r)$, where
\begin{eqnarray*}C_{1,g}(r)=C_{2,g}(r)&=&C|\epsilon|\left(\|Zg\cdot\chi_{E(z_0,r)}\|_2+\|\bar{Z}g\cdot\chi_{E(z_0,r)}\|_2+(1+|\epsilon|)\|g\cdot\chi_{E(z_0,r)}\|_2\right).\end{eqnarray*}
Putting $C(r)=C_{1,g}(r)+C_{2,g}(r)$, we get (\ref{cr}). Then the inequality (\ref{both}) can be obtained  by (\ref{lower}) and applying Cauchy-Schwartz inequality in the last term of the above calculation.

\noindent {\bf Step 3:} Claim: $\displaystyle\inf_{(z,w)\in [0,1)^4}|G(z,w)|=0$.

  From (\ref{both}), we get $|G_r(z',w')|\geq A^{\frac{1}{2}}-2crc_{z,w}^rB^\frac{1}{2}-\frac{C(r)}{c^2}$. Choose $c<1$ such that
  $A^{\frac{1}{2}}-2crc_{z,w}^rB^\frac{1}{2}>\frac{A^{\frac{1}{2}}}{2}$ and letting $r\to 0$ we get $|G_r(z,w)|\geq \frac{A^{\frac{1}{2}}}{2}.$ Since $G_r(z,w)$ is continuous real valued function on $[0,1)^4$ (see \cite{rr55}, pp. 377-385), there exists a continuous real valued function $\theta_{r}$ such that $G_r(z,w)=|G_r(z,w)|e^{i\theta_{r}(z,w)}.$ Define
\begin{center}$\delta_{1,r}(z,w)=1+\dfrac{\psi_{1,r}(z,w)}{G_r(z,w)},$\\$\delta_{2,r}(z,w)=1+\dfrac{\psi_{2,r}(z,w)}{G_r(z,w)},$\\$\delta_{3,r}(z,w)=1+\dfrac{\psi_{3,r}(z,w)}{e^{2\pi i Im(w)}G_r(z,w)},$\\$\delta_{4,r}(z,w)=1+\dfrac{\psi_{4,r}(z,w)}{e^{-2\pi i Im(iw)}G_r(z,w)}.$\end{center}
 Clearly $\delta_{j,r}$ is continuous and non-vanishing on $[0,1]^4$ for each $r>0$ and every $j=1,2,3,4.$ So there exists a continuous real valued function $\theta_{j,r}$ such that $\delta_{j,r}(z,w)=|\delta_{j,r}(z,w)|e^{i\theta_{j,r}(z,w)}$ $~~\mbox{for}~~j=1,2,3,4.$
 Since \begin{center}$G_r(z,w+1)=G_r(z,w)\delta_{1,r}(z,w)$\\$G_r(z,w+i)=G_r(z,w)\delta_{2,r}(z,w)$\\$G_r(z+1,w)=e^{2\pi i Im(w)}G_r(z,w)\delta_{3,r}(z,w)$\\$G_r(z+i,w)=e^{-2\pi i Im(iw)}G_r(z,w)\delta_{4,r}(z,w),$\end{center}
for each $r>0$ and for all $z,w\in [0,1]\times[0,1]$, there are integers $I_r, J_r, K_r$ and $L_r$ such that \begin{center}$\theta_r(z,1)=\theta_r(z,0)+\theta_{1,r}(z,0)+2\pi I_r,$\\$\theta_r(z,i)=\theta_r(z,0)+\theta_{2,r}(z,0)+2\pi J_r$,\\$\theta_r(1,w)=2\pi  Im(w)+\theta_r(0,w)+\theta_{3,r}(0,w)+2\pi K_r$,\\$\theta_r(i,w)=-2\pi  Im(iw)+\theta_r(0,w)+\theta_{4,r}(0,w)+2\pi L_r.$\end{center}
Now \begin{eqnarray*}
0&=&[\theta_r(0,1)-\theta_r(0,i)]+[\theta_r(0,i)-\theta_r(i,i)]+[\theta_r(i,i)-\theta_r(i,1)]+[\theta_r(i,1)-\theta_r(0,1)]\\&=&
[\theta_{1,r}(0,0)-\theta_{2,r}(0,0)+2\pi (I_r-J_r)]+[-\theta_{4,r}(0,i)-2\pi L_r]\\&+& [\theta_{2,r}(i,0)-\theta_{1,r}(i,0)+2\pi (J_r-I_r)]+[-2\pi  +\theta_{4,r}(0,1)+2\pi L_r]\\
&=&[\theta_{1,r}(0,0)-\theta_{2,r}(0,0)-\theta_{4,r}(0,i)+\theta_{2,r}(i,0)-\theta_{1,r}(i,0)-2\pi  +\theta_{4,r}(0,1).
\end{eqnarray*} Letting $r\to 0$ we get $0=-2\pi$, a contradiction.
\qed

Observe that if $|z|f,~ L^{\frac{1}{2}}f\in L^2(\mathbb{C})$, then using the bounds $\|h-f\|_2\leq2\pi |\epsilon|\||z|f\|_2$ in (\ref{nu}) and  $\|Zf\|_2+\|\bar{Z}f\|_2\leq 4\|L^{\frac{1}{2}}f\|_2$ in Lemma \ref{ineq}, the bounds in Theorem \ref{est} can be expressed in terms of $f, |z|f$ and $L^{\frac{1}{2}}f$.

{\noindent \bf Proof of Theorem \ref{th001}:}
By the above observation, the proof of theorem follows similarly as in the proof of Theorem \ref{th01}.\qed
\section{Uncertainty Principle and the BLT}\label{5}
We start this section with the proof of Theorem \ref{HUP}. Then we prove the weaker version of BLT for the exact twisted Gabor frame and establish the equivalence of the BLT and  the weak BLT.\\
\noindent{\bf Proof of Theorem \ref{HUP}:} Let $f\in L^2(\mathbb{C})$. Recall that $\{\phi_{m,n}:m,n\in\mathbb{N}\cup\{0\}\}$ (defined in (\ref{sh})) forms an orthonormal basis for $L^2(\mathbb{C})$.  Then we have $f(z)=\displaystyle\sum_{m,n=0}^\infty\langle f,\phi_{m,n}\rangle\phi_{m,n}(z)$. By Proposition \ref{ii}, we get
$$\|Zf\|_2^2 +\|\bar{Z}f\|_2^2= 4\pi\sum_{m,n=0}^\infty(4n+2)|\langle f,\phi_{m,n}\rangle|^2\geq 8\pi\sum_{m,n=0}^\infty|\langle f,\phi_{m,n}\rangle|^2=8\pi\|f\|_2^2.
$$
Using the fact that $Z\phi_{m,0}=0$ for $m=0,1,2,\cdots,$ we conclude that equality holds in the above inequality if and only if $n=0$ i.e. $f=\displaystyle\sum_{m=0}^\infty c_m\phi_{m,0}$.
\qed

\begin{thm}(Weak BLT)\label{th1}
Assume $g \in L^2(\BC)$ is such that $\mathcal{G}^t(g,1,1)$ is an exact twisted Gabor frame for $L^2(\BC)$ and $\tilde{g}$ be the dual window function. Then the functions $Zg, Z\tilde{g}, \bar{Z}g$ and $\bar{Z}\tilde{g} $ cannot be in $ L^2(\BC)$ simultaneously, i.e.  $\Vert Zg \Vert_2 \; \Vert Z\tilde{g} \Vert_2 \; \Vert \bar{Z}g \Vert_2 \; \Vert \bar{Z}\tilde{g} \Vert_2= + \infty.$
\end{thm}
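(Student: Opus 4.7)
The plan is to argue by contradiction. Suppose all four of $Zg, \bar{Z}g, Z\tilde{g}, \bar{Z}\tilde{g}$ belong to $L^2(\BC)$. Since $\{g_{m,n}\}$ is an exact frame, it is a Riesz basis with biorthogonal dual $\{\tilde{g}_{m,n}\}$, so $\langle g_{m,n}, \tilde{g}_{k,l}\rangle = \delta_{m,k}\delta_{n,l}$; in particular $\langle g, \tilde{g}\rangle = 1$. I will show simultaneously that
\[ \langle Zg, Z\tilde{g}\rangle - \langle \bar{Z}g, \bar{Z}\tilde{g}\rangle = -2 \qquad \text{and} \qquad \langle Zg, Z\tilde{g}\rangle = \langle \bar{Z}g, \bar{Z}\tilde{g}\rangle, \]
which is the desired contradiction.

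For the first identity I would use the Weyl transform rather than try to integrate by parts directly. By Proposition 1.1, $W(Zf) = iW(f)A$ and $W(\bar{Z}f) = iW(f)A^*$ for Schwartz $f$, and these identities extend by density to any $f$ with $f, Zf \in L^2(\BC)$ (resp.\ $\bar{Z}f \in L^2(\BC)$). Applying the Plancherel formula $\langle \cdot, \cdot\rangle_2 = 2\langle W(\cdot), W(\cdot)\rangle_{\mathcal{B}_2}$, cyclicity of trace, and the commutator $[A,A^*]=-2I$ (stated after \eqref{lap}), one obtains
\[ \langle Zg, Z\tilde{g}\rangle - \langle \bar{Z}g, \bar{Z}\tilde{g}\rangle = 2\,\mathrm{tr}\bigl(W(g)[A,A^*]W(\tilde{g})^*\bigr) = -4\langle W(g), W(\tilde g)\rangle_{\mathcal{B}_2} = -2\langle g,\tilde g\rangle = -2. \]

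For the second identity I would exploit Lemma \ref{lem4}, which gives the key feature of the twisted setting: $Z$ and $\bar Z$ commute with every $T^t_{(m,n)}$. Set $\alpha_{m,n} = \langle Zg, \tilde{g}_{m,n}\rangle$, $\gamma_{m,n} = \langle \bar{Z}g, \tilde{g}_{m,n}\rangle$, $\delta_{m,n} = \langle Z\tilde g, g_{m,n}\rangle$, $\beta_{m,n} = \langle \bar Z\tilde g, g_{m,n}\rangle$. Pushing $T^t_{(m,n)}$ through $Z$, applying the relation $T^{t\,*}_{(m,n)} = T^t_{(-m,-n)}$, and using $Z^* = -\bar Z$ (Proposition 1.1(4)), I get
\[ \alpha_{m,n} = \langle ZT^t_{(-m,-n)}g, \tilde g\rangle = -\langle g_{-m,-n}, \bar Z\tilde g\rangle = -\overline{\beta_{-m,-n}}, \]
and analogously $\gamma_{m,n} = -\overline{\delta_{-m,-n}}$. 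The Riesz-basis expansions $Z\tilde g = \sum \delta_{m,n}\tilde g_{m,n}$ and $\bar Z\tilde g = \sum\beta_{m,n}\tilde g_{m,n}$ then yield
\[ \langle Zg, Z\tilde g\rangle = \sum_{m,n}\alpha_{m,n}\overline{\delta_{m,n}} = -\sum_{m,n}\alpha_{m,n}\gamma_{-m,-n}, \]
\[ \langle \bar Zg,\bar Z\tilde g\rangle = \sum_{m,n}\gamma_{m,n}\overline{\beta_{m,n}} = -\sum_{m,n}\gamma_{m,n}\alpha_{-m,-n}, \]
both sums converging absolutely since $\sum|\alpha_{m,n}|^2$ and $\sum|\gamma_{m,n}|^2$ are finite by the Bessel property. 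Reindexing $(m,n)\mapsto(-m,-n)$ in the second sum shows the two quantities agree, completing the contradiction.

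The main obstacle is the first identity: because $Z$ and $\bar Z$ are unbounded, one cannot naively write $\langle [Z,\bar Z]g,\tilde g\rangle = -2\langle g,\tilde g\rangle$ without first ensuring that $Z\bar Z g$ and $\bar Z Zg$ make sense. The Weyl-transform route sidesteps this entirely by transferring the commutator to the bounded operators $A, A^*$ on $L^2(\BR)$, where $[A,A^*] = -2I$ holds on the natural domain (the Hermite basis) and trace cyclicity applies to the Hilbert--Schmidt operators $W(g), W(\tilde g), W(g)A, W(\tilde g)A^*$, all of which are well defined under our hypotheses.
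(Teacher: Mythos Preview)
Your argument is correct and follows the same two-step structure as the paper: first derive the identity $\langle Zg, Z\tilde g\rangle = \langle \bar Zg, \bar Z\tilde g\rangle$ from the Riesz-basis expansion together with Lemma~\ref{lem4} and $Z^*=-\bar Z$, then contrast this with the commutator relation $[Z,\bar Z]=-2I$ and the biorthogonality $\langle g,\tilde g\rangle=1$ to obtain a contradiction. Your treatment of the expansion step, written out with the coefficients $\alpha_{m,n},\beta_{m,n},\gamma_{m,n},\delta_{m,n}$, is exactly the paper's computation \eqref{no} in more explicit notation.

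The only genuine difference is how the commutator step is justified. The paper writes $1=\langle g,\tilde g\rangle=-\tfrac12\langle g,[Z,\bar Z]\tilde g\rangle=-\tfrac12(\langle Zg,Z\tilde g\rangle-\langle\bar Zg,\bar Z\tilde g\rangle)$ directly, relying on the formal adjoint relations; you instead transfer everything to the Weyl side and use $W(Zf)=iW(f)A$, $W(\bar Zf)=iW(f)A^*$, trace cyclicity for Hilbert--Schmidt products, and $[A,A^*]=-2I$. Your route has the advantage that it avoids ever writing $Z\bar Z\tilde g$ or $\bar ZZ\tilde g$ (which need not lie in $L^2(\BC)$): under the hypotheses all four operators $W(g)A$, $W(g)A^*$, $W(\tilde g)A$, $W(\tilde g)A^*$ are Hilbert--Schmidt, so the products and traces are honestly defined and the identity $(W(g)A)(A^*W(\tilde g)^*)-(W(g)A^*)(AW(\tilde g)^*)=-2W(g)W(\tilde g)^*$ can be checked directly on the Hermite basis. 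The paper's shortcut is quicker but leaves that domain issue implicit; your detour buys rigor at the cost of a few extra lines.
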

\begin{proof}
 Assume that $Zg,Z\tilde{g}, \bar{Z}g, \bar{Z}\tilde{g}\in L^2(\BC).$ Since $\mathcal{G}^t(g,1,1)$ is a twisted Gabor frame for $L^2(\BC)$, any $f \in L^2(\BC)$ can be expressed as $f=\sum\limits_{m,n} \langle f,T^t_{(m,n)}g\rangle \tilde{g}_{m,n}=\sum\limits_{m,n}\langle f, \tilde{g}_{m,n} \rangle T^t_{(m,n)}g$, where $\tilde{g}_{m,n}=T^t_{(m,n)} \tilde{g}$. Since the commutators of $Z$ and $\bar{Z}$ with $T^t_{(m,n)}$ satisfy $[T^t_{(m,n)}, Z]= -(m-in) \pi T^t_{(m,n)}$ and $[T^t_{(m,n)}, \bar{Z}]= (m+in)\pi T^t_{(m,n)}$, and $\{T^t_{(m,n)}g\}$ is bi-orthogonal to $\{\tilde{g}_{m,n}\}$, we get
\begin{eqnarray}\label{no}
\langle Zg, Z\tilde{g} \rangle &=& \sum_{m,n} \langle Zg, \tilde{g}_{m,n} \rangle \langle T^t_{(m,n)}g, Z\tilde{g} \rangle \nonumber\\
&=& \sum_{m,n} \langle T^t_{(-m,-n)}g, \bar{Z}\tilde{g} \rangle  \langle \bar{Z} g, \tilde{g}_{-m,-n} \rangle\nonumber\\
&=& \sum_{m,n} \langle \bar{Z} g, \tilde{g}_{m,n} \rangle \langle T^t_{(m,n)}g, \bar{Z}\tilde{g} \rangle \nonumber\\
&=& \langle \bar{Z}g, \bar{Z} \tilde{g} \rangle .
\end{eqnarray}
Therefore, by the bi-orthogonality relation (Proposition 5.4.8 of \cite{chr03}, pp. 101) and the above equality gives \[ 1= \langle g, \tilde{g} \rangle = - \frac{1}{8\pi} \langle g, [Z,\bar{Z}]\tilde{g} \rangle=-\frac{1}{8\pi}(\langle Zg, Z\tilde{g}\rangle -\langle \bar{Z}g, \bar{Z} \tilde{g} \rangle)  =0, \] a contradiction.
\end{proof}
\begin{rmk}
If the twisted Gabor frame $\mathcal{G}^t(g,1,1)$ forms an orthonormal basis then $g=\tilde{g}$ and the above theorem is precisely the analogue of Battle's proof of BLT in \cite{bat88}.
\end{rmk}
 In order to prove the equivalence of the BLT and the weak BLT it is enough to show $\bar{Z} g \in L^2(\BC) \Leftrightarrow Z\tilde{g} \in L^2(\BC) \;\; \mathrm{and} \;\; Zg  \in L^2(\BC) \Leftrightarrow \bar{Z} \tilde{g} \in L^2(\BC)$, whenever $\mathcal{G}^t(g,1,1)$ is an exact frame for $L^2(\mathbb{C}).$ To achieve this goal we need the following proposition.

 \begin{prop}\label{pro1}
If $g \in L^2(\BC)$ and $\mathcal{G}^t(g,1,1)$ is an exact twisted Gabor frame for $L^2(\mathbb{C})$, then the relation $Z^t \tilde{g}=1/{\overline{Z^t g}}$ holds for the dual window $\tilde{g} \in L^2(\BC)$.
\end{prop}
\begin{proof}
 Let $h={Z^t}^{-1}\left(\frac{1}{\overline{Z^t g}}\right)$. By Lemma \ref{lem1} (v), $0 < A^{1/2} \leq |Z^tg| \leq B^{1/2} < \infty$ a. e. on $Q\times Q$. Therefore $h$ is well defined and $h\in L^2(\mathbb{C})$. Let $z=x+iy$ and $w=r+is \in \BC.$ Using Lemma \ref{lem1} (ii) and the bi-orthogonality relation (Proposition 5.4.8 of \cite{chr03}, pp. 101), we have
\begin{eqnarray*}
\langle h,T^t_{(m,n)}g \rangle = \langle Z^t h,Z^t T^t_{(m,n)}g \rangle &=& \int_{Q\times Q} \frac{1}{\overline{Z^t g}(z, w)}e^{-2 \pi i (my-nx)} e^{ -2 \pi i (nr-ms)} \overline{Z^t g}(z,w)\;dzdw \\
&=& \langle \tilde{g}, T^t_{(m,n)}g \rangle, \quad \forall\;\; m,n\in \mathbb{Z}.
\end{eqnarray*} Since $\mathcal{G}^t(g,1,1)$ is complete in $L^2(\BC)$ and $h, \tilde{g} \in L^2(\BC)$, it follows that $h=\tilde{g}$.
\end{proof}
\begin{thm}\label{th2}
If $g \in L^2(\BC)$ and $\mathcal{G}^t(g,1,1)$ is an exact twisted Gabor frame for $L^2(\mathbb{C})$, then
\[
\bar{Z} g \in L^2(\BC) \Leftrightarrow Z\tilde{g} \in L^2(\BC) \;\; \mathrm{and} \;\; Zg  \in L^2(\BC) \Leftrightarrow \bar{Z} \tilde{g} \in L^2(\BC).
\]
\end{thm}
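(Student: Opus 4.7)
The plan is to transfer both equivalences through the twisted Zak transform, where Proposition~\ref{pro1} gives the clean identity $Z^t\tilde g=1/\overline{Z^tg}$ and Lemma~\ref{lem1}(v) forces $A^{1/2}\le|Z^tg|\le B^{1/2}$ almost everywhere, so that $1/|Z^tg|$ is bounded above and below by positive constants.

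First I would establish an intertwining of $Z$ and $\bar Z$ with first-order operators on $L^2(Q\times Q)$. Differentiating the series defining $Z^tf$ and using the elementary identities
\[
\sum_{k\in\BZ^2}\bar k\,f(z-k)\,e^{2\pi i\,Im(w\bar k)}=\tfrac{1}{\pi}\,\partial_w Z^tf(z,w),\quad \sum_{k\in\BZ^2}k\,f(z-k)\,e^{2\pi i\,Im(w\bar k)}=-\tfrac{1}{\pi}\,\partial_{\bar w}Z^tf(z,w),
\]
one arrives at the identities
\[
Z^t(Zf)=\bigl(\partial_z+\tfrac12\bar z-\tfrac{1}{2\pi}\partial_w\bigr)Z^tf,\qquad Z^t(\bar Zf)=\bigl(\partial_{\bar z}-\tfrac12 z-\tfrac{1}{2\pi}\partial_{\bar w}\bigr)Z^tf,
\]
interpreted distributionally for $f\in L^2(\BC)$. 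Combined with unitarity of $Z^t$, these show that $Zf\in L^2(\BC)$ iff the right-hand operator applied to $Z^tf$ lies in $L^2(Q\times Q)$, and analogously for $\bar Zf$.

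Next I would set $F:=Z^tg$, so $Z^t\tilde g=1/\bar F$, and apply the distributional chain rule to $1/\bar F$; this is legitimate since $\bar F\in L^\infty$ is bounded away from zero. Using $\partial_{\bar z}\bar F=\overline{\partial_z F}$ (and similarly in the $w$ variable) together with the conjugation rule $z\bar F=\overline{\bar z F}$, the calculation collapses to
\[
Z^t(Z\tilde g)=-\frac{\overline{Z^t(\bar Zg)}}{(\overline{Z^tg})^{2}},\qquad Z^t(\bar Z\tilde g)=-\frac{\overline{Z^t(Zg)}}{(\overline{Z^tg})^{2}}.
\]
Because $1/(\overline{Z^tg})^{2}$ is bounded above and below in modulus, multiplication by it is an isomorphism on $L^2(Q\times Q)$. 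Hence each right-hand side lies in $L^2(Q\times Q)$ iff its numerator does, and unitarity of $Z^t$ translates this into
\[
\bar Zg\in L^2(\BC)\;\Leftrightarrow\;Z\tilde g\in L^2(\BC)\quad\text{and}\quad Zg\in L^2(\BC)\;\Leftrightarrow\;\bar Z\tilde g\in L^2(\BC).
\]

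The main technical hurdle is justifying the intertwining identities and the chain rule rigorously when $g$ is only in $L^2(\BC)$, not Schwartz. The natural route is to read $Z^tg$, $Z^t(Zg)$ and $Z^t(\bar Zg)$ as distributions on the torus $Q\times Q$ and observe that multiplication by the bounded functions $1/\bar F,\,1/\bar F^{\,2}$ keeps everything in $L^2(Q\times Q)$, so one can either work directly in the distributional sense or approximate $g$ by Schwartz functions and pass to the limit in $L^2$.
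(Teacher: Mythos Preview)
Your proposal is correct and follows essentially the same route as the paper: both derive the intertwining identities $Z^t(Zf)=(\partial_z+\tfrac12\bar z-\tfrac{1}{2\pi}\partial_w)Z^tf$ and $Z^t(\bar Zf)=(\partial_{\bar z}-\tfrac12 z-\tfrac{1}{2\pi}\partial_{\bar w})Z^tf$, apply them to $Z^t\tilde g=1/\overline{Z^tg}$ via the chain rule to obtain $\overline{Z^t(Z\tilde g)}=-Z^t(\bar Zg)/(Z^tg)^2$, and then invoke the two-sided $L^\infty$ bound on $Z^tg$ from Lemma~\ref{lem1}(v). Your treatment of the distributional justification is in fact slightly more explicit than the paper's, which simply remarks that the computation holds ``provided all the calculations are justified in distribution point of view.''
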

\begin{proof}
Assume that $Zg\in L^2(\mathbb{C}).$ Then
\begin{eqnarray}\label{eq12}
Z^t (Zg)(z,w) &=& \sum_{k} Zg(z-k) e^{2 \pi i Im(w.\bar{k})} \nonumber \\
&=& \sum_{k} \left(\frac{d}{dz}+ 2\pi \bar{z}\right)g(z-k) e^{2 \pi i Im(w.\bar{k})} \nonumber \\
&=& \partial_z (Z^tg)(z,w) + 2\pi \bar{z}(Z^tg)(z,w) -  2\partial_w (Z^tg)(z,w).
\end{eqnarray}
Similarly,
$Z^t (\bar{Z}g)(z,w)=\partial_{\bar{z}} (Z^tg)(z,w) - 2\pi z(Z^tg)(z,w) - 2\partial_{\bar{w}} (Z^tg)(z,w)$.
Now using Proposition \ref{pro1}, we compute
\begin{eqnarray}\label{eq14}
\overline{Z^t (Z \tilde{g})(z,w)} &=& \overline{\partial_z (Z^t\tilde{g})(z,w)} + 2\pi z \overline{(Z^t\tilde{g})(z,w)} -  2\overline{ \partial_w (Z^t \tilde{g})(z,w)} \nonumber \\
&=& \partial_{\bar{z}} \left( 1/Z^t g \right)(z,w)+\frac{2\pi z}{(Z^tg)(z,w)}- 2\partial_{\bar{w}}(1/Z^tg)(z,w)\nonumber \\
&=& - \frac{\partial_{\bar{z}}(Z^tg)(z,w)}{(Z^tg)^2(z,w)} + 2\pi z \frac{(Z^tg)(z,w)}{(Z^tg)^2(z,w)}+  \frac{2\partial_{\bar{w}}(Z^tg)(z,w)}{(Z^tg)^2(z,w)} \nonumber \\
&=& - \frac{\partial_{\bar{z}}(Z^tg)(z,w)- 2\pi z (Z^tg)(z,w)- 2\partial_{\bar{w}}(Z^tg)(z,w)} {(Z^tg)^2(z,w)}\nonumber \\
&=&  - \frac{Z^t(\bar{Z}g)(z,w)}{(Z^tg)^2(z,w)}.
\end{eqnarray}
Thus it follows that $\bar{Z}g  \in L^2(\BC) \Leftrightarrow Z \tilde{g} \in L^2(\BC)$, provided all the calculations are justified in the sense of distributions. The other equivalent relation can be obtained by a similar argument.
\end{proof}
\begin{rmk}\label{rmkl}
Let $g \in L^2(\BC)$ be such that $\mathcal{G}^t(g,1,1)$ is an exact twisted Gabor frame for
$L^2(\mathbb{C})$. Then the following statements hold.
\begin{itemize}
\item[$(i)$] The function $L^{\frac{1}{2}}g$ cannot be in $L^2(\mathbb{C})$: If
$L^{\frac{1}{2}}g\in L^2(\mathbb{C})$, then
$\|L^{\frac{1}{2}}g\|_2^2=\langle L^{\frac{1}{2}}g,
L^{\frac{1}{2}}g\rangle=\langle g,
Lg\rangle=\frac{1}{2}(\|Zg\|_2^2+\|\bar{Z}g\|_2^2).$
Therefore
$L^{\frac{1}{2}}g\in L^2(\mathbb{C}) \Leftrightarrow Zg,\bar{Z}g\in
L^2(\mathbb{C})$, leads to a contradiction to Theorem \ref{th01}.
\item[$(ii)$] The functions $Z\bar{Z}g$ and $\bar{Z}Zg$ cannot both be in
$L^2(\mathbb{C})$: If $Z\bar{Z}g,\bar{Z}Zg\in L^2(\mathbb{C})$, then
$Lg\in L^2(\BC) \quad \mathrm{and~this~ would~ imply} \quad
L^{\frac{1}{2}}g=L^{-\frac{1}{2}}(Lg)\in L^2(\BC) $ contradicting to Theorem
\ref{th01}.
\item[$(iii)$] The operators $R$ and $\bar{R}$ (Riesz transforms)
defined by $Rg=Z L^{-\frac{1}{2}}g \quad \mathrm{and} \quad \bar{R}g=\bar{Z}
L^{-\frac{1}{2}}g.$ Then the functions $\bar{Z}Rg$ and $Z\bar{R}g$ cannot both be in
$L^2(\mathbb{C})$: If $\bar{Z}Rg, Z\bar{R}g \in L^2(\BC)$, then
$L^{\frac{1}{2}}g=-\frac{1}{2}(\bar{Z}Z+Z\bar{Z})L^{-\frac{1}{2}}g=-\frac{1}{2}(\bar{Z}R+Z\bar{R})g
\in L^2(\BC),$
leading to a contradiction.
\item [$(iv)$] By AM-GM inequality, $\|Zg\|_2\|\bar{Z}g\|_2=+\infty$ would imply $\|L^{\frac{1}{2}}g\|_2=+\infty$. So Theorem \ref{th01} implies Theorem \ref{th001}. But there exists a function $g\in L^2(\mathbb{C})$ such that $\||z|g\|_2\|L^{\frac{1}{2}}g\|_2=+\infty$ but $\|Zg\|_2\|\bar{Z}g\|_2<+\infty.$ : Let $h(z)=\sum_{j,k=2}^\infty \frac{1}{jk}\phi_{j,k}(z)$ and $g(z)=L^{-\frac{1}{2}}h(z)$. Then clearly $h\in L^2(\mathbb{C}).$ Since Riesz transforms are bounded operators on $L^2(\mathbb{C})$, $\|Zg\|_2\|\bar{Z}g\|_2<+\infty.$ Now we show that $\||z|g\|_2=+\infty.$ Then by
    Theorem 1.3.3. page 17-18 of \cite{tha93}, we get $zg(z)=zL^{-\frac{1}{2}}h(z)=\sum_{j,k=2}^\infty \frac{1}{jk}zL^{-\frac{1}{2}}\phi_{j,k}(z)=\sum_{j,k=2}^\infty \frac{1}{jk\sqrt{2k+1}}z\phi_{j,k}(z)=\sum_{j,k=2}^\infty \frac{1}{jk\sqrt{2k+1}}i[\sqrt{2j}\phi_{j-1,k}(z)-\sqrt{2k+1}\phi_{j,k+1}(z)].$ So $\||z|g\|_2^2=2\sum_{j,k=2}^\infty \frac{1}{jk^2(2k+1)}+\sum_{j,k=2}^\infty \frac{1}{j^2k^2}=+\infty.$
    \item [$(v)$] By Proposition \ref{pro1} and Lemma \ref{lem1} (v), the system $\mathcal{G}^t(\tilde{g}, 1, 1)$ is also an exact twisted Gabor frame. Therefore the statements $(i), (ii)$ and $(iii)$ also hold if $g$ is replaced by the dual window $\tilde{g}$.
\item [$(vi)$] BLT and the Hermite operator: The BLTs can also be established for the Hermite operator (see (\ref{her})) i.e. for $g\in L^2(\mathbb{R})$, if the Gabor system $\mathcal{G}(g, 1, 1)$ forms an exact frame for $L^2(\mathbb{R})$, then $\|xg\|_2\|H^{\frac{1}{2}}g\|_2=\|Ag\|_2\|A^*g\|_2=+\infty.$
\end{itemize}
\end{rmk}
\noindent{\bf Proof of Corollary \ref{C3}}:
If $g$ is a radial function (i.e. $g(z)=g(|z|)$) on $\mathbb{C}$, then the Weyl transform reduces to the Laguerre transform and $W(g)=2\pi\sum_{N=0}^\infty R_N(g)P_N,$
 where $R_N(g)=\displaystyle\int_0^\infty g(s)L_N(\frac{s^2}{2})\exp(-\frac{s^2}{4})s\,ds$,  $L_N(s)$ is the usual Laguerre polynomial of type 0
 and $P_N$ is the projection of $L^2(\mathbb{R})$ onto the $N$th eigenspace spanned by the normalized Hermite function $h_N$ (see \cite{tha97, tha90}).
 Since $W(L^{\frac{1}{2}}g)=W(g)H^\frac{1}{2}$, we have $W(L^{\frac{1}{2}}g)=2\pi\sum_{N=0}^\infty (2N+1)^{\frac{1}{2}}R_N(g)P_N.$
 So $\|W(L^{\frac{1}{2}}g)\|_{\mathcal{B}_2}^2=4\pi^2\displaystyle\sum_{N=0}^\infty (2N+1)|R_N(g)|^2$.
  Define $g^\sharp(N)=\frac{1}{2}\int_0^\infty g(\sqrt{s})L_N(\frac{s}{2})\exp(-\frac{s}{4})\,ds=R_N(g)$. By the Plancherel formula for the Weyl transform we have $\|L^{\frac{1}{2}}g\|_2^2=16\pi^2\sum_{N=0}^\infty (2N+1)|g^\sharp(N)|^2$.
 By part $(i)$ of Remark \ref{rmkl} we get $\sum_{N=0}^\infty (2N+1)|g^\sharp(N)|^2=+\infty$.
\qed

The BLT and the amalgam BLT are two distinct results. The following examples illustrate the difference between the BLT and the amalgam BLT.
\begin{example}\label{e1}
Let $f: \BR^2 \to \BR$ defined by
$ f(x,y)=
\begin{cases}
      e^{-\left[ \frac{1}{x(1-x)} +\frac{1}{y(1-y)} \right]}, & (x,y) \in
(0,1) \times (0,1), \\
      0, & \mathrm{otherwise}.
\end{cases}
$
Let $z = x+iy$ and define $g: \BC \to \BR$ by
$ g(z)=\displaystyle\sum_{(k_1, k_2) \in \BN^2 } \frac{1}{k_1^{\frac{3}{2}}
k_2^{\frac{3}{2}}} f(x-k_1, y-k_2). $
Then clearly $g \in W(C_0, \ell^1).$

Further, $W(g)=\displaystyle\sum_{(k_1, k_2) \in \BN^2 } \frac{1}{k_1^{\frac{3}{2}}
k_2^{\frac{3}{2}}} Wf(x-k_1, y-k_2). $
Clearly $W(g) \in \mathcal{B}_2$. From the inversion formula for the Weyl
transform it follows that $W(g) \in \mathcal{W}.$
Next we show that $\|\bar{Z}g \|_2=+\infty.$ Consider
\begin{eqnarray*}
&& \|\bar{Z}g \|^2_2 = \int_{\BC} |\bar{Z}g(z)|^2 dz \\
& \geq & \sum_{m, n \in \BN} \int_{[m,m+1] \times [n,n+1]} \bar{Z}g(z) \cdot
\overline{\bar{Z}g(z)} \; dz \\
&=& \sum_{m,n \in \BN} \int_{[m,m+1] \times [n,n+1]} \left[ \left|
\frac{dg(z)}{d\bar{z}} \right|^2 +4\pi^2 |zg(z)|^2- 2\pi\mathrm{Re} \left(\bar{z}
g(z) \frac{dg(z)}{d\bar{z}} \right) \right] dz.
\end{eqnarray*}
Note that for each $m, n\in\mathbb{N}$ and $(x,y)\in (m,m+1)\times (n,n+1)$, the integrand
\begin{align*}
 & \left|
\frac{dg(z)}{d\bar{z}} \right|^2 - 2\pi\mathrm{Re} \left(\bar{z} g(z)
\frac{dg(z)}{d\bar{z}} \right)   \\
&= \frac{1}{4m^3 n^3}
e^{-\left[ \frac{2}{x(1-x)} +\frac{2}{y(1-y)} \right]} \Bigg{[}
\frac{(2x-1)^2}{x^4(1-x)^4} + \frac{(2y-1)^2}{y^4(1-y)^4} +\frac{4\pi(2x-1)}{x(1-x)^2} +\frac{4\pi(2y-1)}{y(1-y)^2}
\Bigg{]} \geq 0.
\end{align*}
Therefore
\begin{align*}
\|\bar{Z}g \|^2_2  & \geq 4\pi^2 \sum_{m, n \in \BN} \int_{[m,m+1] \times
[n,n+1]} |zg(z)|^2 \; dz \\
& \geq 4\pi^2 \sum_{m, n \in \BN} \int_{[m,m+1] \times [n,n+1]}
\frac{m^2+n^2}{m^3 n^3} |f(x-m,y-n)|^2 \; dxdy \\
& \geq 4\pi^2\|f\|^2_2 \sum_{m \in \BN} \frac{1}{m} = +\infty.
\end{align*}
Since $\|L^\frac{1}{2}g\|^2_2=\frac{1}{2}(\|Zg \|^2_2+\|\bar{Z}g \|^2_2)$, $L^\frac{1}{2}g\notin L^2(\mathbb{C}).$
So there exists a function on $L^2(\mathbb{C})$ such that $\|Zg\|_2\|\bar{Z}g\|_2=\|L^{\frac{1}{2}}g\|_2\||z|g\|_2=+\infty$, but $g\in W(C_0,\ell^1)$ and $W(g) \in \mathcal{W}.$ That means the BLT does not imply the amalgam BLT.
\end{example}
The following example shows that the amalgam BLT does not imply the BLT.
\example \label{e2} We construct a function $f$ such that $Zf$ and $\bar{Z}f\in L^2(\mathbb{C})$ but $f\not \in W(C_0,\ell^1)$ and $W(f)\not\in \mathcal{W}.$ For sufficiently large $k$ (say $k>N$) choose  $a_k\neq b_k$ such that $[a_k-\frac{1}{k},b_k+\frac{1}{k}]\subset [k,k+1]$ and $b_k^3-a_k^3<k$. Define the continuous function $g_k$ by $$
g_k(x)=
\begin{cases}
\dfrac{1}{\log k}(x-a_k+\frac{1}{k}), & x\in[a_k-\frac{1}{k},a_k], \\
\dfrac{1}{k\log k}, & x\in [a_k,b_k],\\
\dfrac{1}{\log k}(b_k+\frac{1}{k}-x), & x\in[b_k,b_k+\frac{1}{k}], \\
0, & x\not \in [a_k-\frac{1}{k},b_k+\frac{1}{k}].
\end{cases}
$$
Clearly the function $g=\displaystyle \sum_{k=N}^\infty g_k$ is continuous on $\mathbb{R}$.
Also $\|g\|_2\leq \displaystyle2\sum_{k=N}^\infty \dfrac{1}{(k\log k)^2}<\infty$, $\|xg\|_2\leq \displaystyle3\sum_{k=N}^\infty \dfrac{1}{k(\log k)^2}<\infty$,
and $\|g'\|_2\leq \displaystyle2\sum_{k=N}^\infty \dfrac{1}{k(\log k)^2}<\infty$, where $g'$ is the classical derivative of $g$, defined except at countably many points.
 Define $f(z)=f(x,y)=g(x)g(y)$. Since $Zf=\frac{1}{2}(f_x-if_y)+2\pi(xf-iyf)$, we have
\begin{eqnarray*}\|Zf\|_2&\leq &\frac{1}{2}(\|f_x\|_2+\|f_y\|_2)+2\pi(\|xf\|_2+\|yf\|_2)  \\&= &\frac{1}{2}(\|g'\|_2\|g\|_2+\|g'\|_2\|g\|_2)+2\pi(\|xg\|_2\|g\|_2+\|yg\|_2\|g\|_2)<\infty.
\end{eqnarray*} Similarly $\|\bar{Z}f\|_2<\infty.$ So $L^\frac{1}{2}f$ and $|z|f$ are in $L^2(\mathbb{C})$. We have
$\| f\|_{W(L^\infty,\ell^1)} = \sum_{k \in \BZ^2} \Vert f \cdot T_{k} \chi_{[0,1)^2} \Vert _\infty  = \sum_{k_1,k_2=N}^\infty \dfrac{1}{k_1\log k_1}\dfrac{1}{k_2\log k_2}=+\infty.$
If $W(f)\in \mathcal{W}$, then the inversion formula for Weyl transform  gives $f\in W(C_0,\ell^1)$, which is a contradiction. Therefore $W(f)\notin \mathcal{W}$.

\section*{Acknowledgments}
The first author wishes to thank the Ministry of Human Resource Development, India for the  research fellowship and
Indian Institute of Technology Guwahati, India for the support provided during the period of this work.

\end{document}